\def\C{\mathfrak C}
\def\K{\bold K}
\def\Z{\bold Z}
\def\emptyset{\varnothing}
\def\L{\mathscr{L}}
\def\into{\hookrightarrow}
\def\sinto{\overset{\mathsmaller{\leq}}{\into}}
\def\bigland{\bold{\bigwedge}}
\def\cl{\mathop{\roman{cl}}}
\def\fraisse{Fra\"{\i}ss\'e}
\newcommand{\finsubs}{\mathrel{\subseteq_\omega}}
\newcommand{\epo}{\mathrel{\preceq}}
\def\x{\bar x}
\def\y{\bar y}
\def\z{\bar z}
\def\isom{\cong}
\def\b{{\bar b}}
\let\cl\undefined
\DeclareMathOperator{\cl}{cl}
\DeclareMathOperator{\cltp}{cltp}
\DeclareMathOperator{\mcl}{mcl}
\DeclareMathOperator{\tp}{tp}
\DeclareMathOperator{\Th}{Th}
\newtheorem{thm}{Theorem}[section]
\newtheorem{cor}[thm]{Corollary}
\newtheorem{lem}[thm]{Lemma}
\newtheorem{notn}[thm]{Notation}
\newtheorem{prop}[thm]{Proposition}
\theoremstyle{remark}
\newtheorem{ex}[thm]{Example}
\theoremstyle{definition}
\newtheorem{defn}[thm]{Definition}
\title{Amalgamation Classes with $\exists$-Resolutions}
\author{Justin Brody}
\address{Department of Mathematics and Computer Science\\
Goucher College\\
1021 Dulaney Valley Road\\
Baltimore, MD 21204}
\email{justin.brody@goucher.edu}
\begin{document}

\begin{abstract}
  Let $\K_d$ denote the class of all finite graphs and, for graphs $A
  \subseteq B$, say
  $A \leq_d B$ if distances in $A$ are preserved
  in $B$; i.e. for $a, a' \in A$ the length of the shortest path in
  $A$ from $a$ to $a'$ is the same as the length of the shortest path
  in $B$ from $a$ to $a'$.  In this situation $(\K_d, \leq_d)$ forms
  an {\it amalgamation class} and one can perform a Hrushovski
  construction to obtain a generic of the class.  One particular
  feature of the class $(\K_d, \leq_d)$ is that a closed superset of a
  finite set need not include {\it all} minimal pairs obtained
  iteratively over that set but only enough such pairs to resolve
  distances; we will say that such classes have {\it $\exists$-resolutions}.

  In \cite{moss}, Larry Moss conjectured the existence of graph $M$
  which was $(\K_d, \leq_d)$-injective (for $A \leq_d B$ any isometric
  embedding of $A$ into $M$ extends to an isometric embedding of $B$
  into $M$) but without finite closures.  We examine Moss's conjecture
  in the more general context of amalgamation classes.  In particular,
  we will show that the question is in some sense more interesting  in
  classes with $\exists$-resolutions and will give some conditions under
  which the possibility of such structures is limited.
\end{abstract}
\maketitle

\let\a\undefined
\newcommand{\a}{{
  \mathchoice{\forall}{\forall}{\scriptscriptstyle\forall}{\forall}
}}

\newcommand{\e}{{
  \mathchoice{\exists}{\exists}{\scriptscriptstyle\exists}{\exists}
}}

\section{Introduction}
This paper arises from an investigation of the class of {\it distanced
  graphs} first explored by Larry Moss in \cite{moss}.  In that paper,
Moss produced a {\it
  universal distanced graph}; this is a countable graph $U$ into which
every finite graph $A$ embeds {\it isometrically}.  That is, for any
finite graph $A$ there is
a map $f: A \into U$ such that for every $a, a' \in A$, the length of
the shortest path in $A$ from $a$ to $a'$ is the same as the length of
the shortest path in $U$ from $a$ to $a'$.

Moss's construction proceeded by expanding the language of graphs with
predicates $d_n(x,y)$ which were interpreted to indicate that the
(path-length) distance from $x$ to $y$ was precisely $n$.  While Moss
did not directly use a \fraisse\ limit to obtain his structure, a
number of his results can be recovered by doing so (see \cite{kuek}).

It is also possible to reinterpret Moss's work in the framework of the
Hrushovski construction.  Here we work in the language of graphs, let
$\K_d$ be the class of all finite graphs and specify that (for
$A, B \in \K_d$) $A \leq_d B$ just in case the inclusion map
$A \into B$ is an isometry.  This is an amalgamation class (see
Section \ref{dg-sect} or \cite{moss} for details) and can thus be
associated with a $(\K_d, \leq_d)$-generic.  The latter is the unique
(up to isomorphism) countable graph $G_d$ satisfying the following
conditions
\begin{itemize}
\item[] {\bf (Age):} Every finite subgraph of $G_d$ is in $\K_d$
\item[] {\bf (Injectivity):} If $f: A \into G_d$ so that $f(A)$ is
  isometric in $G_d$, 
  then for any $B$ with $A \leq_d B$, $f$ extends to an embedding $g:
  B \into G_d$ such that $g$ is an isometry.
\item[] {\bf (Finite Closures):}  For any finite subset $X \subseteq G_d$, there
  is a finite $Y$ with $X \subseteq Y \subseteq G_d$ and $Y$ is
  isometric in $G_d$
\end{itemize}
It is easy to show that in this situation $G_d$ is also a countable
universal distanced graph\footnote{It is in fact elementarily equivalent to
Moss's $U$ (see Theorem 4 of \cite{kuek}) and will consist of
countably many copies of $U$.}.  Moss
conjectured that there was a graph $M$ which satisfied the first two of
the three properties above but not the third.  For any amalgamation
class $(\K, \leq)$ we will call $M$ a {\it Moss structure} if it
satisfies appropriate analogues of the first two conditions but not
the third (see Definition \ref{defn-moss-struct}).  

There are three main results of this paper.  The first is that the
existence of a $(\K, \leq)$ Moss structure depends on a geometric
property of $(\K, \leq)$.  In particular, we will define what it means
for $(\K, \leq)$ to have $\forall$-closures (as most studied classes
do) versus $\exists$-resolutions (as the class of distanced graphs
$(\K_d, \leq_d)$ does).  We will show the following

\begin{restatable}{theorem}{mainthm}
\label{thm:main}\ 
  \begin{enumerate}
  \item If $(\K_\forall, \leq_\forall)$ is an amalgamation class with
    $\forall$-closures, then a $(\K_\forall, \leq_\forall)$ Moss
    structure exists exactly when the $(\K_\forall,
    \leq_\forall)$-generic is not $\omega$-saturated.   Further, if a
    $(\K_\forall, \leq_\forall)$ Moss structure exists, it can be
    taken to be a model of the theory of the $(\K_\forall, \leq_\forall)$-generic.
  \item If $(\K_\exists, \leq_\exists)$ is an amalgamation class with
    finitary $\exists$-closures, then no $(\K_\exists, \leq_\exists)$
    Moss structure exists which is a model of the the theory of the
    $(\K_\exists, \leq_\exists)$-generic.
  \end{enumerate}
\end{restatable}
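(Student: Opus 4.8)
The plan is to reduce both parts to a single principle: whether a structure satisfies Injectivity while failing Finite Closures is governed by which ``closure types'' are realized, and the behaviour of these types under elementary extension is exactly what separates $\forall$-closures from $\exists$-resolutions. Throughout write $G$ for the $(\K,\leq)$-generic and $T=\Th(G)$. First I would isolate the first-order content of the three conditions. Under the standing hypotheses, (Age) is captured by an axiom scheme, so every model of $T$ satisfies it. The decisive input is the behaviour of $\cl$: in a $\forall$-closure class the assertion ``$\bar a$ is closed'' is a type (a conjunction, ranging over all potential minimal pairs, that none is present), so strongness is type-definable and therefore the finitary extension property is first-order axiomatizable. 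Consequently $T$ is axiomatized by the Age scheme together with the extension scheme, and a structure satisfies (Age) and (Injectivity) if and only if it is a model of $T$. In particular \emph{every} $\forall$-closure Moss structure is automatically a model of $T$, which already yields the second sentence of part (1); it remains to decide when one exists.

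For part (1) I would characterize failure of Finite Closures in a model of $T$ as the realization of an \emph{infinite-closure type}: a type $p(\bar x)$ over a finite parameter set every realization $\bar a$ of which has $\cl(\bar a)$ infinite, i.e.\ which forces an unbounded strong chain $\bar a\le\bar a_1\le\bar a_2\le\cdots$ of iterated minimal pairs. The key observation is that, because $G$ has Finite Closures and is generic, $G$ realizes \emph{every} consistent type over a finite set whose closure is finite (the relevant finite strong configuration embeds by Injectivity), while it must omit any type forcing an infinite closure. Hence $G$ is $\omega$-saturated if and only if no infinite-closure type is consistent with $T$. This gives both directions at once. If $G$ is $\omega$-saturated, then every model of $T$ has finite closures, and since (by the first paragraph) any structure satisfying (Age) and (Injectivity) is a model of $T$, no Moss structure exists. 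If $G$ is not $\omega$-saturated, fix a consistent infinite-closure type $p$ and realize it in an $\omega$-saturated elementary extension $M\succ G$; then $M\models T$ inherits (Age) and (Injectivity) (the latter because strongness is type-definable, hence preserved), and the realization of $p$ exhibits a finite set with infinite closure, so $M$ is the desired Moss structure and a model of $T$.

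For part (2) the point is that finitary $\exists$-closures annihilate the infinite-closure types exploited above. Here $\cl(X)$ for finite $X$ need not contain all iterated minimal pairs but only finitely many resolving witnesses, and I would observe that ``$Y$ is a finite set resolving $X$'' (that is, $X\subseteq Y$ with $Y$ strong) is an \emph{existential} condition: there is a first-order formula $\varphi_X(\bar x)$, true of $X$ in $G$, asserting the existence of such a finite resolving $Y$ together with the finitely many relations witnessing its strongness. Since $G\models T$, the satisfaction of $\varphi_X$ by each finite $X$ persists to every model of $T$: for $M\models T$ and finite $X\subseteq M$ the formula $\varphi_X$ is witnessed by a finite strong $Y\subseteq M$. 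Thus every model of $T$ has Finite Closures and cannot be a Moss structure. The contrast with the $\forall$-case is exactly that there strongness is a genuinely universal condition whose realization is not an existential consequence of $T$ and so can be omitted.

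The hard part will be the bookkeeping underlying part (1): making precise that for $\forall$-closure classes the \emph{only} obstruction to realizing a finitary type in $G$ is an infinite closure --- equivalently, that every finite-closure type is realized in $G$ and that strongness is genuinely type-definable so that (Injectivity) survives the passage to $M\succ G$. Pinning down a \emph{single} type over a finite tuple whose realization forces an infinite closure, and verifying that such a type is consistent with $T$ precisely when $G$ omits some type, is where the real work lies. By comparison part (2) is soft once the existential characterization of resolving sets is isolated; the only care needed there is to express strongness of the finite resolving set by a genuine (finite) quantifier-free condition, which is what ``finitary $\exists$-closures'' should provide.
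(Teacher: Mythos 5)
Your reduction has genuine gaps in both parts, and in each case the gap sits exactly where the paper has to do its real work.

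In part (1), the load-bearing claim of your first paragraph --- that for a $\forall$-closure class a structure satisfies (Age) and (Injectivity) if and only if it models $T$ --- is unjustified, and one direction of it is false in general. Type-definability of ``$\bar a$ is closed'' does not make (Injectivity) first-order: (Injectivity) has the shape ``for all $\bar x$, if $\bar x$ realizes the closedness type, then a strong copy of $B$ exists over $\bar x$'', and a statement with a type-definable hypothesis and an existential conclusion is not preserved under elementary equivalence; this is exactly why richness of Hrushovski generics is famously non-elementary. Concretely, models of $T$ need not be injective (in ab initio constructions, a model containing only ``low-dimensional'' configurations models $T$ but embeds no strong copy of structures of large predimension), and in the other direction the standard back-and-forth showing that injective structures model $T$ uses finite closures --- precisely what a Moss structure lacks; accordingly the paper claims only that a Moss structure \emph{can be taken} to model $T$. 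The repair for your forward direction is to bypass $T$ altogether: by Baldwin--Shi \cite{bal-shi}, $\omega$-saturation of the generic is equivalent to the \emph{class} having finite closures, i.e.\ to \emph{every} structure in which $\K$ is cofinal having finite closures, which rules out all Moss structures at once, elementarily equivalent to $G$ or not. Your backward direction does match the paper's Proposition~\ref{yes-moss} (realize the infinite-chain type in an elementary extension of $G$), but your justification that the $\omega$-saturated $M \succ G$ is injective ``because strongness is type-definable, hence preserved'' is not a valid mechanism; what is actually needed is an argument using $\omega$-saturation together with the closure-type machinery (Proposition~\ref{closed-elem}, which requires full amalgamation) to see that every finite fragment of the type ``$\bar y$ is a copy of $B$ over $\bar a$ and $\bar a\bar y$ is closed'' is satisfiable, and then saturation to realize it.

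In part (2) the central difficulty is assumed away. There is no first-order formula $\varphi_X(\bar x)$ asserting ``there exists a finite strong $Y \supseteq \bar x$'': read literally this is an infinite disjunction over the isomorphism types of the pair $(X,Y)$, with no uniform bound on $|Y|$ available (in the distanced graphs, resolutions of pairs are geodesics of unbounded length as the pair varies), and satisfaction of an infinite disjunction in $G$ does \emph{not} transfer to $M \equiv G$ --- this is the same compactness obstruction you worry about in part (1). Moreover, strongness of the resolving set is not a quantifier-free or existential condition ``witnessed by finitely many relations''; it is the \emph{absence} of better resolutions anywhere in the ambient model. What finitariness actually buys (the paper's lemma on the formulas $k_B$) is that closedness of a copy of a \emph{fixed} finite structure $B$ is expressible by a \emph{universal} formula, so ``there is a closed copy of $B$ over $\bar a$'' is first-order only after $B$ has been fixed. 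The paper's proof of Theorem~\ref{no-moss}.\ref{finitary} therefore has to first construct the candidate resolution $B$ \emph{inside the given model} $M$ --- choosing for each $X \subseteq A$ a biminimal extension $Y_X \finsubs M$ that is $\epo_X$-minimal, which is where the well-ordering hypothesis enters --- and only then transfer from $G$ to $M$ a sentence about copies of that particular $B$. Without this step the transfer argument cannot even be formulated, so your assessment that part (2) is ``soft'' inverts where the real work lies.
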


We do not resolve the general question of the existence of Moss
structures for classes with $\exists$-closures, nor do we answer the
specific question of whether or not a $(\K_d, \leq_d)$ Moss structure
exists (although the second part of the above theorem implies that if
it does it cannot be a model of the theory of the generic).

We will also show that classes with $\e$-resolutions and full
amalgamation have superstable theories and prove a transfer theorem
for classes with $\e$-resolutions and related classes (which we will
call $\a$-companions).

\section{Amalgamation}

We will work throughout with amalgamation classes $(\K, \leq)$ in a
finite relational language $L$.  These are classes $\K$ of finite
$L$-structures partially ordered by the relation $A \leq B$ (read $A$
is {\it strong} or {\it closed} in $B$) with which one can produce a
canonical structure (the {\it generic} of the class) via an imitation
of the \fraisse\ construction.  We will specifically require our
classes to be closed under substructure and isomorphism, that $\leq$
be isomorphism invariant (if $A \leq B$ and $f: B \isom B'$, then
$f(A) \leq B'$) and to satisfy the following axioms of Baldwin and Shi
(\cite{bal-shi}):
\begin{itemize}
\item[\bf A1]\label{ax1} For $M \in \K$, $M \leq M$
\item[\bf A2]\label{ax2} For $M,N \in \K$, if $M \leq N$, $M \subseteq N$
\item[\bf A3]\label{ax3} For $A,B,C \in \K$, if $A \leq B \leq C$, then $A \leq C$
\item[\bf A4]\label{ax4} For $A,B,C \in \K$, if $A \leq C$ and $A \subseteq B \subseteq C$,
  then $A \leq B$
\item[\bf A5]\label{ax5} We have $\emptyset \in \K$ and
  $\emptyset \leq A$ for every $A \in \K$.
\end{itemize}

In most examples of the Hrushovski construction in the current
literature, the following additional axiom is also satisfied.
  \begin{itemize}
  \item[\bf A6]\label{ax6}  If $A \leq B$ and $X$ is embedded in a
    common superstructure of $B$, $A \cap X \leq B \cap X$
  \end{itemize}

  This axiom guarantees that in the $(\K, \leq)$-generic the smallest
  closed superset of a finite set is uniquely defined.  Our primary
  interest in this paper is in examining amalgamation classes which do
  {\it not} satisfy {\bf A6}.

  In order to produce a generic of $(\K, \leq)$, we also require that
  the class have the {\it amalgamation property}, defined below.  We
  quickly summarize the main ideas of the construction; see (e.g.)
  \cite{bal-shi, mcl-kuek, wagner} for more details.

\begin{notn}\ 
  \begin{itemize}
  \item We will write $A \finsubs B$ to indicate that $A$ is a {\bf
      finite} substructure of $B$.
  \item We will write $XY$ to denote $X \cup Y$.
  \end{itemize}

\end{notn}
  \def\sinto{\overset{{\leq}}{\into}}
\begin{defn}

  Let $(\K, \leq)$ be a class of $L$-structures partially ordered by
  $\leq$ satisfying A1--A5 as above.
  \begin{enumerate}
  \item For any $L$-structure $M$, the set of $M$'s finite substructure
    is referred to as the {\it age} of $M$.
  \item If $M$ is an $L$-structure whose age in contained in $\K$, we
    say that $\K$ is {\it cofinal} in $M$.
  \item If  $\K$ is cofinal in $M$ and $A \finsubs M$, we say that $A
    \leq M$ if for every $X$ with  $A \subseteq X \finsubs M$, $A \leq X$.
  \item If $M$ has $\K$ cofinal in it and  $f:  A \into M$ is an
    embedding, we say that $f$ is a {\it strong} or {\it closed}
    embedding if $f(A) \leq M$.  We write this as $f:  A \sinto B$
  \item We say that $(\K, \leq)$ has the {\it amalgamation property}
    if for $f:  A \sinto B, g: A \sinto C$, there is a structure $D
    \in \K$ and strong embeddings $f':  B \sinto D, g':  C \sinto D$ such that
    $f' \circ f = g' \circ g$.  In other words, given the solid part
    of the following diagram it  can be completed to commute

\centerline{ \xymatrix{
    &B \ar@{^{(}.>}_{f'}^{\leq}[rd]    \\
    A \ar@{^{(}->}_{f}^{\leq}[ru] \ar@{^{(}->}_{\leq}^{g}[rd] &&D \\
    &C \ar@{^{(}.>}_{\leq}^{g'}[ru] }} We will call $D$ an {\it
  amalgam} of $B$ and $C$ over $A$.
  \end{enumerate}
\end{defn}

We will call $(\K, \leq)$ an {\it amalgamation class} if it satisfies
A1--A5 and has the amalgamation property.

In fact, we will often want to work with especially nice forms of
amalgamation.  
\begin{defn} Suppose $A,B,C$ are elements of $\K$ with $A = B \cap C$,
  and let $D$ be the structure whose universe is $BC$ and whose
  relations are precisely those of $B$ and those of $C$.  Then we will
  denote $D$ by $B \oplus_{A} C$
  
  \begin{itemize}
  \item If $(\K, \leq)$ is an amalgamation class in which $B
    \oplus_{A} C$ is an amalgam of $B$ and $C$ over $A$, then we will
    call $B \oplus_{A} C$ the {\it free amalgam} of $B$ and $C$ over
    $A$ and say that $(\K, \leq)$ is a {\it free amalgamation class}.

  \item A free amalgamation class is {\it full} if for $A,B,C\in\K$
    and $A\le B$, $ A\subseteq C$, then $C\le D$, where
    $D=B \oplus_A C$.
  \end{itemize}
\end{defn}

Given an amalgamation class $(\K, \leq)$, one can imitate the
\fraisse\ construction to produce a {\it generic} of the class.  This
is the unique (up to isomorphism) countable $L$-structure $G$ which
satisfies:

\begin{enumerate}
\item[] {\bf (Age):}   $\K$ is cofinal in $G$.
\item[] {\bf (Injectivity):} For $A, B \in \K$, if $f: A \sinto G$ and
  $A \leq B$, then $f$ extends to some ${\hat f}: B \sinto G$.
\item[] {\bf (Finite closures):} For every $A \finsubs G$, there is a
  finite $B$ with $A \subseteq B \leq G$.
\end{enumerate}

We will say that any structure which satisfies the first two of these
conditions is $(\K, \leq)$-injective.

It turns out that the model theory of the generic is largely
determined by the complexity of the closure operation ( that is,  by the
complexity of finding a minimal finite superset $B$ of a given $A \finsubs G$
for which $B \leq G$).  We introduce some notions for analyzing such
supersets.  The fundamental one is that  of a minimal pair, which is a
minimal example of an extension which is not strong.

\begin{defn} Let $(\K, \leq)$ be an amalgamation class.
  \begin{enumerate}
  \item   For $X, Y \in \K$ with $X \subseteq Y$, we say that $(X,Y)$
    is a {\it minimal pair} if $X \not  \leq Y$ but $X \leq  Y_0$ for
    $X \subseteq Y_0 \subsetneq Y$.  We will also call $Y$ a {\it
      minimal extension} of  $X$.
  \item If $(X,Y)$ is a minimal pair, we say that it is a {\it
      biminimal pair} if whenever $X_0 \subseteq X, Y_0 \subseteq Y
    \setminus X$ and $(X_0, X_0 Y_0)$ is a minimal pair, we must  have
    $X = X_0$ and $Y = Y_0$.  We will also say that $Y$ is  a {\it
      biminimal extension} of $X$.
  \end{enumerate}
\end{defn}

In \cite{bal-shi}, Baldwin and Shi  noted that in classes satisfying
A6, for $A, B \in \K$ we have $A \leq B$ exactly when for $X \subseteq
A, Y \subseteq B$ and $(X,Y)$ a biminimal pair, we must have $Y
\subseteq B$.  This will not hold as biconditional without A6, but
does give a sufficient condition.

\begin{lem}\label{no-bmp-suff}
  If $(\K, \leq)$ is an amalgamation class (and in particular satisfies
  A1-A5) and $A \subseteq B$ are structures from $\K$ such that for
  every biminimal pair $(X,Y)$ with $X \subseteq A, Y \subseteq B$ we
  have $Y \subseteq A$, then $A \leq B$.
\end{lem}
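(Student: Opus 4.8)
The plan is to prove the contrapositive: assuming $A \not\leq B$, I construct a biminimal pair $(X,Y)$ with $X \subseteq A$ and $Y \subseteq B$ for which $Y \not\subseteq A$, contradicting the hypothesis. This is exactly the Baldwin--Shi sufficient direction noted just above the statement; recall that the converse uses {\bf A6}, but for this implication only {\bf A1} is needed, together with the finiteness of the structures, which lets us pass to inclusion-minimal witnesses. Throughout, every substructure of $B$ lies in $\K$ since $\K$ is closed under substructure, so the relation $\leq$ applies to all the sets below.

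First I would locate a minimal pair sitting over $A$ inside $B$. Since $A \not\leq B$, the family of substructures $Z$ with $A \subseteq Z \subseteq B$ and $A \not\leq Z$ is nonempty (it contains $B$) and finite, so it has an inclusion-minimal member $Y^{*}$. For any $Y_0$ with $A \subseteq Y_0 \subsetneq Y^{*}$, minimality forces $A \leq Y_0$, so $(A, Y^{*})$ is a minimal pair; and since $A \leq A$ by {\bf A1}, we have $Y^{*} \neq A$, whence $Y^{*} \setminus A \neq \emptyset$.

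Next I would descend from $(A, Y^{*})$ to a biminimal pair. Consider the family $\mathcal{S}$ of pairs $(X_0, Y_0)$ with $X_0 \subseteq A$ and $\emptyset \neq Y_0 \subseteq Y^{*} \setminus A$ such that $(X_0, X_0 Y_0)$ is a minimal pair. This is nonempty, as $(A, Y^{*}\setminus A) \in \mathcal{S}$, and finite, so under the coordinatewise-inclusion order it has a minimal element $(X, Y_0)$; put $Y = X Y_0$. I claim $(X,Y)$ is biminimal. Indeed, since $X \subseteq A$ and $Y_0 \subseteq Y^{*}\setminus A$ are disjoint, the ``new points'' $Y \setminus X$ are exactly $Y_0$; hence any minimal pair $(X_1, X_1 Y_1)$ with $X_1 \subseteq X$ and $Y_1 \subseteq Y_0$ again satisfies $X_1 \subseteq A$ and $Y_1 \subseteq Y^{*}\setminus A$, with $Y_1 \neq \emptyset$ (a minimal pair must have nonempty new part, else $X_1 \not\leq X_1$ violates {\bf A1}). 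Thus $(X_1, Y_1) \in \mathcal{S}$ refines $(X, Y_0)$, and minimality gives $(X_1, Y_1) = (X, Y_0)$ --- precisely the biminimality condition.

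Finally, $(X,Y)$ is a biminimal pair with $X \subseteq A$ and $Y = X Y_0 \subseteq Y^{*} \subseteq B$, yet $Y_0$ is a nonempty subset of $Y^{*}\setminus A$, so $Y \not\subseteq A$. This contradicts the hypothesis, proving $A \leq B$. The main obstacle is the middle step: one must set up the descent so that the resulting pair is minimal in \emph{both} coordinates and then check that coordinatewise-minimality in $\mathcal{S}$ coincides exactly with biminimality, which is where the disjointness $X \cap Y_0 = \emptyset$ and the nonemptiness of the new part (via {\bf A1}) are doing the real work.
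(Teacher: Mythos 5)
Your proof is correct and takes essentially the same route as the paper's: assume $A \not\leq B$ and extract a biminimal pair $(X,Y)$ with $X \subseteq A$, $Y \subseteq B$ but $Y \not\subseteq A$, contradicting the hypothesis. The paper's own proof is only a two-line sketch of this extraction; your two-stage minimization (first an inclusion-minimal witness $Y^{*}$ to $A \not\leq B$, then a coordinatewise-minimal pair in $\mathcal{S}$, using finiteness and A1) supplies precisely the details the paper leaves implicit.
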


\begin{proof}
  Suppose $A \not \leq B$.  If $(A,B)$ is not a biminimal pair then by
  definition there must be some $X \subseteq A$ and $Y \subseteq B$
  such that $(X,Y)$ is a biminimal pair.
\end{proof}

\section{Distanced Graphs}\label{dg-sect}
In this section we will formally explore that class of {\it distanced
  graphs} mentioned in the introduction.  This class was studied by
Moss in \cite{moss} and will form our canonical example of a class
that does not satisfy A6 but does have what we will
$\exists$-resolutions.  The remainder of the paper will examine this
more general property, with a special eye toward the question of the
existence of Moss structures.

\begin{defn}
  Let $G$ be a graph.  For $a, b \in G$, a {\it path} from $a$ to $b$
  in $G$ is a set of vertices $a = p_0, \ldots p_n = b$ where:
  \begin{enumerate}
  \item $p_i \in G$
  \item there is an  edge from $p_i$ to $p_{i+1}$ for $i < n$
  \item for $i, j < n$, $p_i = p_j$ only if $i=j$ (thus we consider
    all paths to be simple)
  \end{enumerate}
The {\it length} of a path is the number of edges in the path.
\end{defn}

As before, let $\K_d$ be the class of all finite graphs and say
$A \leq_d B$ when for every $a,a' \in A$, the minimal path length from
$a$ to $a'$ in $A$ is the same as the minimal path length from $a$ to
$a'$ in $B$.  It is easy then to verify that A1 -- A5
hold, and the ``strong amalgamation lemma'' of \cite{moss} shows
that $(\K_d, \leq_d)$ has free amalgamation.  We show that in fact it
has full amalgamation.

To that end, fix $A, B, C \in \K_d$ with $A \leq_d B, A \subseteq C$
and let $D = B \oplus_A C$.  We have to show that $C \leq_d D$
($D \in \K_d$ by definition).  That is, for $x,y \in C$ we have to
show that any minimal length path from $x$ to $y$ in $C$ has the same
length as a minimal length such path in $D$.  Let $p$ be a minimal
length path from $x$ to $y$ in $D$ and suppose by way of contradiction
that it is shorter than any path from $x$ to $y$ in $C$.  List the
vertices of $p$ as $x = x_0, \ldots, x_n = y$ where there is an edge
between each $x_i$ and $x_{i+1}$.  Since the amalgam is free, we can
choose pairs of indices $(s_j, e_j)$ such that:
\begin{itemize}
\item $s_j < e_j$ for all $j$
\item $p_{s_j}, p_{e_j} \in A$
\item For $s_j < k < e_j$, we have $p_k \in B \setminus A$
\end{itemize}
Since $p$ shorter than any path from $x$ to $y$ in $C$, we must have
that for some $j$, $p_{s_j}, p_{s_j + 1}, \ldots p_{e_j}$ is shorter
than any path from $p_{s_j}$ to $p_{e_j}$ in $A$.  This contradicts
that $A \leq_d B$.

Let $M$ be the $(\K_d, \leq_d)$-generic.  Then we can define a closed
superset of $A \finsubs M$ by recursively adding minimal length paths
over subsets of $A$.  In particular, for $x, y \in M$ we
define $\chi(x,y)$ to be any minimal length path from $x$ to $y$.
Then let $J_0(A) = \bigcup \{\, \chi(x,y) : x,y \in A  \,\}$ and having
defined $J_n$ let $J_{n+1}(A) = \{\, \chi(x,y) : x,y \in J_n(A)  \,\}$.
Letting $B = \bigcup_{n \in \omega} J_n$ it is easy to see that $A
\subseteq B \leq_d M$.  Thus we can define a minimal closed superset of
$A$ by inductively adding to $A$ a single minimal pair $(xy,
\chi(xy))$ for each $x,y \in A$.  This idea will form the basis for
our definition of a class with $\e$-resolutions. 

\begin{lem}
  For any finite graph $X$, the pair $(X,Y)$ is a
  biminimal pair in $(\K_d, \leq_d)$ if and only if $X$ is a pair of
  points not joined by an edge and $Y$ is a path between them.
\end{lem}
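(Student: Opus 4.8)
The plan is to prove both implications directly from the definitions, using throughout the elementary fact that for graphs $A \subseteq B$ (so that $A$ is an induced subgraph) and $a,a' \in A$ we have $d_B(a,a') \leq d_A(a,a')$, where $d_A(a,a')$ is the minimal path length in $A$, with the convention $d_A(a,a') = \infty$ when $a,a'$ lie in different components. Thus $X \not\leq_d Y$ means precisely that some pair $a,a' \in X$ has $d_Y(a,a') < d_X(a,a')$; I will call such a pair \emph{reduced}. A preliminary remark I will use repeatedly is that, by A4, to verify the minimality clause it suffices to check $X \leq_d Y_0$ only for the maximal proper intermediates $Y_0 = Y \setminus \{v\}$ with $v \in Y \setminus X$, since every proper intermediate sits inside such a $Y_0$.

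For the easy direction, suppose $X = \{a,a'\}$ with no edge and $Y$ is a path $a = p_0, \ldots, p_n = a'$ (so that, $Y$ being the induced structure on the $p_i$, there are no chords). Then $d_X(a,a') = \infty > n = d_Y(a,a')$, so $X \not\leq_d Y$, while deleting any internal $p_i$ disconnects $a$ from $a'$ and restores $d = \infty$; hence $(X,Y)$ is a minimal pair. For biminimality, any $X_0 \subseteq X$ with $|X_0| \leq 1$ satisfies $X_0 \leq_d Z$ vacuously (there is no pair to reduce), so $X_0$ cannot head a minimal pair; and for $X_0 = X$ the extension $X Y_0$ reduces the distance $d(a,a')$ below $\infty$ only when $Y_0$ contains every internal vertex, i.e. $Y_0 = Y \setminus X$. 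Thus the only minimal sub-pair is $(X,Y)$ itself.

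For the converse, assume $(X,Y)$ is biminimal. Among all reduced pairs choose $(s,t)$ with $d_Y(s,t)$ minimal and fix a shortest path $Q$ from $s$ to $t$ in $Y$. The main work is to show that $Q$ meets $X$ only in $s,t$: if $w \in X$ were strictly interior to $Q$ then $d_Y(s,w), d_Y(w,t) < d_Y(s,t)$, so by the minimal choice neither $(s,w)$ nor $(w,t)$ is reduced, and the triangle inequality in $X$ gives $d_X(s,t) \leq d_X(s,w) + d_X(w,t) = d_Y(s,w) + d_Y(w,t) = d_Y(s,t) < d_X(s,t)$, a contradiction. Since $Q$ is shortest it is moreover chordless, and $s,t$ are non-adjacent (otherwise $d_Y(s,t)=1$ could not be reduced), so the induced structure on $V(Q)$ is exactly the path $Q$. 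Applying minimality of $(X,Y)$ to the reducing intermediate $X \cup V(Q)$ forces $X \cup V(Q) = Y$, whence $Y \setminus X$ is precisely the set of interior vertices of $Q$.

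It then remains to collapse $X$ to the two endpoints. By the previous paragraph $(\{s,t\}, V(Q))$ is itself a minimal pair (a non-adjacent pair together with a chordless path between them, exactly the configuration treated in the easy direction), with $\{s,t\} \subseteq X$ and $V(Q) \setminus X = Y \setminus X$. Biminimality of $(X,Y)$ now forces $X = \{s,t\}$ and $Y = V(Q)$, so $X$ is a pair of non-adjacent points and $Y$ is a path between them, as required. The only genuinely delicate points are the two structural facts about $Q$ --- that it avoids the interior of $X$ and carries no chords --- since everything else is a direct unwinding of the (bi)minimality conditions together with the reduction to single-vertex deletions via A4.
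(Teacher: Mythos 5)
Your proof is correct, and it is worth noting that the paper itself offers no argument here: its proof of this lemma is literally the single word ``Clear.'' So your write-up supplies details the author omitted, and it does so soundly. Both directions check out. In the easy direction, the reduction via A4 to single-vertex deletions is legitimate, and deleting an interior vertex of a chordless path does restore the distance to $\infty$, giving minimality; the case analysis on $|X_0|$ handles biminimality. In the converse, the two structural facts you flag as delicate are exactly the right ones, and your arguments for them work: choosing a reduced pair $(s,t)$ with $d_Y(s,t)$ minimal and using the triangle inequality in $X$ to show the shortest path $Q$ avoids the interior of $X$ is a clean device, and chordlessness of $Q$ is the standard fact that shortest paths are induced. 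The final step --- feeding the minimal pair $(\{s,t\}, V(Q))$, whose interior lies in $Y \setminus X$, back into the biminimality of $(X,Y)$ to collapse $X$ to $\{s,t\}$ and $Y$ to $V(Q)$ --- is exactly how the definition of biminimality is meant to be used (reading the paper's ``$Y = Y_0$'' in that definition in the only sensible way, namely $Y = X_0 Y_0$, as you implicitly do). The one point worth making explicit, which you use silently, is that $d_Y(s,t) < d_X(s,t)$ forces $d_Y(s,t)$ to be finite, so $Q$ exists; this is immediate from $d_Y \leq d_X$, but it is the reason the whole construction gets off the ground.
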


\begin{proof}
  Clear.
\end{proof}

\section{$\exists$-Resolutions}
In this section we formally define the notion of having
$\exists$-resolutions and examine its consequences.  The notion will
correspond to being able to form a closed superset of a finite base by
iterative adding {\it some} minimal pair extensions which occur over
the base.  This is in contrast to Baldwin and Shi's building of the
intrinsic closure by iteratively adding {\it all} minimal pair
extensions which occur over a given a base.

\begin{defn}
  Let $(\K, \leq)$ be an amalgamation class satisfying the axioms
  A1 through A5 above.  Following Baldwin and Shi \cite{bal-shi}, 
  for any $M$ cofinal with $\K$ and $A \finsubs M$, we
  define the {\bf maximal} closure ($\mcl$) as follows
  \begin{itemize}
  \item $I_0(A) := A \cup \bigcup \{\, B \subseteq M : \exists A_0 \finsubs A
   \text{ with } (A_0, B) \text{ a minimal pair}\ \,\}$
  \item $I_{n+1}(A) := \bigcup \{\, B \subseteq M : \exists A_0 \finsubs I_n(A)
   \text{ with } (A_0, B) \text{ a minimal pair}\ \,\}$
 \item $\mcl_M(A) := \bigcup_{n \in \omega} I_n(A)$
  \end{itemize}
\end{defn}

It is worth noting that under axiom A6 $\mcl_M(A)$ is precisely the
closure of $A$ in $M$.  In general it is clear that $\mcl_M(A) \leq M$
(by Lemma \ref{no-bmp-suff}).  If $\mcl_M(A)$ is always the smallest
closed superset of $A$ which is closed in $M$, then we say that
$(\K, \leq)$ has {\it ${\forall}$-closures} or {\it $\a$-resolutions}.
Intuitively, in these classes minimal pairs can be thought of as
obstructions to a set's being closed, and all such obstructions must be
included in the closure of a finite set.
\begin{lem}  
  For any amalgamation class $(\K, \leq)$ satisfying  A1 -- 
    A5, Axiom A6 holds if and only if $(\K, \leq)$ has
  $\forall$-closures.
\end{lem}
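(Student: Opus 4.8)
The statement is an equivalence, so I would prove the two implications separately; in both I rely on the characterization of $\not\leq$ via biminimal pairs supplied by Lemma~\ref{no-bmp-suff}.

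\textbf{A6 implies $\forall$-closures.} Fix $M$ cofinal with $\K$ and $A \finsubs M$. That $\mcl_M(A) \leq M$ is already noted (it follows from Lemma~\ref{no-bmp-suff}, since by construction every minimal pair over a finite subset of $\mcl_M(A)$ is absorbed into $\mcl_M(A)$), so it remains to show that $\mcl_M(A)$ sits inside every closed superset. Let $D$ with $A \subseteq D \leq M$ be arbitrary; I would prove $I_n(A) \subseteq D$ by induction on $n$. The engine of the induction is the claim that if $(X_0, Y)$ is a minimal pair with $X_0 \subseteq D$ and $Y \subseteq M$, then $Y \subseteq D$. To see this, apply A6 to $D \leq M$ with the finite set $Y$ to obtain $D \cap Y \leq Y$. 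Now $X_0 \subseteq D \cap Y \subseteq Y$; were $Y \not\subseteq D$ we would have $D \cap Y \subsetneq Y$, so minimality of the pair gives $X_0 \leq D \cap Y$, and A3 applied to $X_0 \leq D \cap Y \leq Y$ yields $X_0 \leq Y$, contradicting that $(X_0,Y)$ is a minimal pair. Hence $Y \subseteq D$, and running this over each minimal pair appearing at stage $I_{n+1}$ completes the induction, giving $\mcl_M(A) = \bigcup_n I_n(A) \subseteq D$ as required.

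\textbf{$\forall$-closures implies A6.} I would argue the contrapositive. Suppose A6 fails, witnessed by $A \leq B$ and $X$ inside a common superstructure with $A' := A \cap X \not\leq B' := B \cap X$ (note $A', B' \in \K$ as substructures of $X$). By Lemma~\ref{no-bmp-suff} there is a biminimal pair $(X_0, Y)$ with $X_0 \subseteq A'$, $Y \subseteq B'$ and $Y \not\subseteq A'$. The key observation is that $X_0, Y, A'$ all already lie inside $B$ (since $A' \subseteq A \subseteq B$ and $Y \subseteq B' \subseteq B$), so I would embed $B$ strongly into the generic $M$: because $\emptyset \leq B$ (A5), injectivity extends the empty strong embedding to $B \sinto M$, and then $A \leq B \leq M$ gives $A \leq M$ by A3. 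Inside $M$ the pair $(X_0, Y)$ is still a minimal pair with $X_0 \subseteq A'$ and $Y \subseteq M$, so $Y \subseteq I_0(A') \subseteq \mcl_M(A')$. On the other hand $A$ is a closed superset of $A'$, yet $Y \not\subseteq A$: since $Y \subseteq B' \subseteq X$, an inclusion $Y \subseteq A$ would force $Y \subseteq A \cap X = A'$, contrary to $Y \not\subseteq A'$. Thus $\mcl_M(A')$ is not contained in the closed superset $A$, so it is not the smallest closed superset of $A'$ and $\forall$-closures fails.

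The routine parts are the inductions and the isomorphism-invariance bookkeeping; the step I expect to require the most care is the forward direction's use of A6 when the closed superset $D$ may be infinite. There $D \cap Y \leq Y$ should be read through the natural extension of closedness to infinite sets (for every finite $Y \finsubs M$, $D \cap Y \leq Y$), which for a finite $D$ reduces to applying A6 to the finite configuration $D \leq D \cup Y$ and coincides with part (3) of the Definition precisely because A6 is in force. I would make this reduction to finite witnesses explicit before running the induction.
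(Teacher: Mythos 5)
Your proof is correct, but it takes the opposite route to the paper's on both implications, so the two arguments are genuinely different. The paper proves ``$\forall$-closures implies A6'' \emph{directly}: given $A \leq B$ and $C$, any minimal pair $(X,Y)$ with $X \subseteq A \cap C$, $Y \subseteq B \cap C$ is in particular a minimal pair over a subset of $A$, so $\forall$-closures (applied to $A$ as a closed superset of itself in $B$) forces $Y \subseteq A$, hence $Y \subseteq A \cap C$, and the absorption criterion of Lemma~\ref{no-bmp-suff} finishes; and it proves ``A6 implies $\forall$-closures'' by \emph{contraposition}, turning a finite closed $A \leq M$ with an escaping minimal pair $(X,Y)$ into the A6-violation $A \cap Y \not\leq \mcl_M(A) \cap Y$. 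You do the reverse in both cases. Your direct induction on the stages $I_n(A)$ --- with the key claim that under A6 a closed set absorbs every minimal pair over its subsets, via $D \cap Y \leq Y$, minimality, and A3 --- is genuine added value: it makes explicit the reduction from arbitrary (possibly infinite) closed supersets $D$ to finite configurations, a reduction the paper's contrapositive silently assumes when it asserts that a failure of $\forall$-closures is witnessed at a \emph{finite} closed set. On the other hand, your contrapositive for ``$\forall$-closures implies A6'' is heavier than necessary: once Lemma~\ref{no-bmp-suff} hands you the biminimal pair $(X_0,Y)$ inside $B \cap X$ with $Y \not\subseteq A \cap X$, there is no need to pass to the generic (and hence no need to invoke the amalgamation property at all) --- $\K$ is cofinal in $B$ itself and $A \leq B$, so $M = B$ already witnesses the failure of $\forall$-closures, which is essentially why the paper's treatment of this implication introduces no auxiliary structure. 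Finally, the delicate point you flag at the end is real but is shared with the paper: A6 and $\leq$ are stated for members of $\K$, while $\mcl_M(A)$ and closed supersets may be infinite, so some extension of closedness to infinite subsets (your ``$D \cap Y \leq Y$ for every finite $Y \finsubs M$'', or equivalently relativizing your key claim to a finite closed subset of $D$ containing $X_0$) must be fixed before the induction runs; making this explicit is an improvement over the paper, which elides it.
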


\begin{proof}
  Suppose $(\K, \leq)$ has $\forall$-closures.  Let $A \leq B$ and let
  $C$ be any element of $\K$ which is in a common superstructure of
  $A,B$ (so that the intersections $A \cap C, B \cap C$ make sense).
  We have to show that $A \cap C \leq B \cap C$.  Let $(X,Y)$ be a
  minimal pair with $X \subseteq A \cap C$ and $Y \subseteq B \cap C$.
  Then $(X,Y)$ is also a minimal pair with $X \subseteq A$, so by
  $\forall$-closures $Y \subseteq A$.  Since $Y$ was assumed to be
  contained in $Y \cap C$, $Y$ is contained in $A \cap C$ as needed.

  Conversely, if $(\K, \leq)$ does not have $\forall$-closures, then
  there is a finite $A$ and some $M$ in which $\K$ is cofinal such that $A \leq
  M$ but for some minimal pair $(X,Y)$ with  $X \subseteq A$ we have
  $Y \not \subseteq A$.  Let $C = \mcl_M(A)$; then $A \leq C \leq
  M$.  However $A \cap Y \not \leq C \cap Y$ since $A \cap Y \supseteq
  X$ and $C \cap Y = Y$.

\end{proof}

For any $M$ as above, let us call $B$ a {\bf resolution} of $A$ in $M$
if $A \subseteq B \leq M$.  Such a resolution will be called {\bf
  minimal} if there no resolution $B'$ of $A$ with $B' \subseteq B$.  Minimal
resolutions will also be called {\bf closures}; note that in the
absence of axiom A6, minimal closures need not be unique while in
classes with $\forall$-closures the notion is precisely that of the
$M$-closure of $A$.

\begin{defn}
  Let us say that a reflexive binary relation $R$ on a set $Z$ {\it induces
  a partial order} if, letting
  $[a]_R := \{\, b \in Z : aRb \text{ and } bRa \,\}$, and defining
  $[a]_R R' [b]_R$ whenever $a_0 R b_0$ for some
  $a_0 \in [a]_R, b_0 \in [b]_R$, we have that $R'$ is well defined
  and is a partial order on the equivalence classes of $Z$.
\end{defn}

\def\tl{\unlhd}
Note that $R$ induces a partial order on $Z$ exactly when there is a
map $f:  Z \to (P, \tl)$ where $(P, \tl)$ is partial order and for
$z,z' \in Z$, $z R z'$ exactly when $f(z) \tl f(z')$.

\begin{defn}\label{ex-cl-defn}
  We say that a class $(\K, \leq)$ satisfying A1-A5 has
  {\it $\exists$-resolutions} if for any $X$, there is a relation $\epo_X$
  which induces a partial order on the
  class $\{\, Y : (X, Y) \text{ is a biminimal pair} \,\}$ such that
  for $A \subseteq B \in \K$, we have $A \leq B$ if and only if for
  every $X \subseteq A$, if $(X,Y')$ is a biminimal pair with $Y'
  \subseteq B$ then there is a biminimal pair $(X, Y)$ with $Y
  \subseteq A$ and $Y \epo_X Y'$.

  A class $(\K, \leq)$ with $\exists$-closures will further be said to
  be {\it coherent} if for any $(\K, \leq)$-biminimal pair $(X,Y)$ if
  \begin{enumerate}
  \item There is some $Z$ with $X \subseteq Z \subseteq Y$ such
    that there is a biminimal pair $(U,V)$ with $U \subseteq Z$ and
    $V = Y \setminus Z$; and
  \item  there is some $V'$ with $(U, V')$ a biminimal pair; and
  \item  $V' \epo_U V$
  \end{enumerate}
  Then $(X, ZV')$ is a biminimal pair and $ZV' \epo_X ZV$; i.e. $ZV'
  \epo_X Y$.


\end{defn}

Intuitively, the coherence of a class indicates that if we swap a part
of a biminimal extension with something smaller with respect to a
subextension, then the resulting biminimal extension will be smaller
as well.  In many of our examples the partial order will be induced by
a number-valued function and coherence will come from a kind of
monotonicity of that function.\footnote{In fact, the notion of
  coherence can be seen as a generalization of the prinicple of
  optimalilty in computer science.  The latter states that optimal
  solutions to a global problem are comprised of optimal solutions to
  local problems.  Here global optimality would correspond to
  $\epo_X$-minimality while local optimality would correspond to
  $\epo_{U}$-minimality}

As for classes with $\a$-closures, we want to extend the defintion of
$\leq$ to arbitrary structures $M,N$ in which $\K$ is cofinal.
\begin{defn}
  If $(\K, \leq)$ is an amalgamation class with $\e$-resolutions and
  $\K$ is cofinal in $M,N$ with $M \subseteq N$, then we say that
  $M \leq N$ when for any biminimal pair $(X,Y)$ with
  $X \finsubs M, Y \finsubs N$, there is some $Y' \epo_X Y$ with
  $Y' \subseteq M$.
\end{defn}


In analogy with Baldwin and Shi's characterization of the intrinsic
closure in classes which satisfy A6, we offer the following

\begin{lem}\label{lem-compiled-ec}
  If $(\K, \leq)$ is a free amalgamation class which satisfies A1-A5,
  then it has $\exists$-closures if and only if for any $M$ in which
  $\K$ is cofinal, there are functions
  $\pi_M: [M]^{< \omega} \rightarrow 2^{[M]^{< \omega}}$ (a
  ``potential function'') and
  $\chi_M: [M]^{< \omega} \to [M]^{< \omega}$ (a ``choice function'')
  $M$ such that 
  \begin{enumerate}
  \item For $X \finsubs M$, $\pi_M(X)$ is a subset of $\{\, Y
    \subseteq M : (X, Y) \text{ is a biminimal pair} \,\}$  
  \item For $A \finsubs M$, $\chi_M(A) \in \pi(A)$  such that $A_\chi \leq M$, where $A_\chi$ is defined inductively
    as follows
    \begin{itemize}
    \item $J_0(A) := A \cup  \bigcup \{\, \chi_M(A_0) \text{ for } A_0
      \finsubs A\,\}$
    \item $J_{n+1}(A) := J_n(A) \cup  \bigcup \{\, \chi_M(A_0) \text{ for } A_0
      \finsubs J_n(A) \,\}$
    \item $A_\chi := \bigcup_{n \in \omega} J_n(A)$
    \end{itemize}
  \end{enumerate}
\end{lem}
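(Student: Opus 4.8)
The plan is to prove the two implications separately, in each case translating between the order-theoretic formulation of $\e$-resolutions and the dynamic ``resolution procedure'' encoded by $\pi_M$ and $\chi_M$. Throughout I will lean on the extended meaning of $\leq$ for infinite structures together with its characterization: $A\leq M$ holds exactly when every biminimal pair $(X,Y)$ with $X\finsubs A$, $Y\finsubs M$ admits a biminimal $(X,Y')$ with $Y'\subseteq A$ and $Y'\epo_X Y$.

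For the forward implication, assume $(\K,\leq)$ has $\e$-resolutions witnessed by the orders $\epo_X$, and fix $M$ with $\K$ cofinal. I would set $\pi_M(X):=\{\,Y\subseteq M:(X,Y)\text{ is a biminimal pair}\,\}$, so that condition (1) is immediate, and define $\chi_M(A)$ to be an $\epo_A$-least member of $\pi_M(A)$ when $\pi_M(A)\neq\emptyset$ (and $\chi_M(A):=A$ otherwise). To verify (2), i.e. $A_\chi\leq M$, take any biminimal $(X,Y)$ with $X\finsubs A_\chi$ and $Y\finsubs M$. Since $A_\chi=\bigcup_n J_n(A)$ and $X$ is finite, $X\subseteq J_n(A)$ for some $n$, whence $\chi_M(X)\subseteq J_{n+1}(A)\subseteq A_\chi$; as $Y\in\pi_M(X)$, the chosen extension $\chi_M(X)$ then furnishes the required witness $\epo_X Y$ inside $A_\chi$, which is precisely the condition for $A_\chi\leq M$.

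For the reverse implication, assume the functions $\pi_M,\chi_M$ exist for every cofinal $M$. I would define $\epo_X$ on biminimal extensions intrinsically, taking disjoint copies over $X$ and setting $Y\epo_X Y'$ iff $Y\leq Y\oplus_X Y'$ in $\K$; this relation agrees with the length order in $(\K_d,\leq_d)$. Reflexivity is A1, while antisymmetry and transitivity are obtained by freely amalgamating the competing extensions over $X$ and applying A3--A4. It then remains to check that $\leq$ is characterized by the associated witness condition: the easy direction unpacks the extended definition of $\leq$ directly, and the converse -- that satisfying the witness condition forces $A\leq M$ -- reduces, via property (2), to the statement that iterating the single-valued choice $\chi_M$ already reaches a closed set, which is exactly the content of $A_\chi\leq M$.

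The step I expect to be the main obstacle is the point in the forward direction where a \emph{single} value $\chi_M(X)$ must $\epo_X$-dominate \emph{every} realized biminimal extension $Y$ of $X$. This is automatic only when the $\epo_X$-minimal extensions form a single class, i.e. when a least element exists; faced with two $\epo_X$-incomparable minimal extensions a one-valued choice cannot reproduce the closure, since along the recursion the base $X$ contributes only the one set $\chi_M(X)$. Establishing that free amalgamation collapses competing minimal extensions to a least one -- by amalgamating $Y_1$ and $Y_2$ over $X$ and using the characterization to compare them inside $Y_1\oplus_X Y_2$ -- is the delicate point on which the equivalence rests, and it is the analogue here of the well-foundedness that the number-valued potentials supply in the motivating examples.
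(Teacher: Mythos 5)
Your proposal follows the same skeleton as the paper's proof: in the forward direction you build $\pi_M$ and $\chi_M$ out of $\epo_X$-smallest biminimal extensions, and in the reverse direction you define $Y \epo_X Y'$ by whether $Y$ is closed in $Y \oplus_X Y'$, which is equivalent (via A3/A4 and the extended definition of $\leq$) to the paper's definition through a strong embedding of $Y \oplus_X Y'$ into the generic. The genuine gap is in your reverse direction, at exactly the point where the paper's proof does its real work: showing that this relation induces a partial order. You assert that antisymmetry and transitivity follow ``by freely amalgamating the competing extensions over $X$ and applying A3--A4,'' but they do not. To get $Y \leq Y \oplus_X Y''$ from $Y \leq Y \oplus_X Y'$ and $Y' \leq Y' \oplus_X Y''$ along these lines you would need $Y \oplus_X Y' \leq (Y \oplus_X Y') \oplus_X Y''$, and that is an instance of \emph{full} amalgamation over the base $Y'$; the lemma assumes only free amalgamation, and free amalgamation is inapplicable here precisely because $Y' \not\leq Y \oplus_X Y'$ when $Y$ and $Y'$ lie in different $\epo_X$-classes. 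What your sketch never invokes is the hypothesis of the lemma, and that is what the paper uses: for $D = (Y \oplus_X Y') \oplus_X Y''$ it applies the assumed choice function $\chi_D$, argues by free amalgamation and biminimality that $\chi_D(X)$ must be one of $Y, Y', Y''$ and that $X\chi_D(X) \leq D$, and then rules out $Y'$ and $Y''$, leaving $Y \leq D$. Transitivity is the content of hypothesis (2), not of A1--A5 plus free amalgamation, so a proof of this step that never mentions $\chi$ cannot succeed.

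In the forward direction your construction and verification essentially coincide with the paper's, except that you require $\chi_M(A)$ to be an $\epo_A$-\emph{least} realized extension and you candidly admit you cannot prove such a least element exists, leaving it as ``the delicate point on which the equivalence rests.'' As a proof this is therefore incomplete; but you should know the paper's own proof is no more detailed here: it takes $\pi(X)$ to be the set of $\epo_X$-\emph{minimal} realized extensions, chooses $\chi(X)$ arbitrarily, and declares $A_\chi \leq M$ ``clear,'' which is likewise valid only when the minimal realized extensions of each finite set form a single $\epo_X$-class (equivalently, a least class exists), as happens in the motivating examples where the order comes from a well-founded number-valued potential. So on this point you have correctly identified a suppressed assumption rather than introduced a new error; a complete argument would have to either derive comparability of realized minimal extensions from Definition \ref{ex-cl-defn} together with free amalgamation, or record it as an explicit hypothesis.
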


\begin{proof}
  Suppose $(\K, \leq)$ has $\e$-resolutions.  Fix $M$ as above and
  for $X \finsubs M$, let $\pi(X)$ be the set of $\epo_X$-minimal
  extensions that occur over $X$ (or $\{\, \emptyset \,\}$ if there
  are no biminimal extensions), and choose $\chi(X)$ arbitrarily
  from $\pi(X)$.  Then it is clear that $A_\chi \leq M$ as desired.

  Conversely, suppose that our condition is satisfied for any $M$ with
  age $\K$.  Let $M$ be the $(\K, \leq)$-generic.  For $X \in \K$ and
  biminimal extensions $Y, Y'$, let $D = Y \oplus_X Y'$ and choose a
  strong embedding of $D$ into $M$; abusing notation we will identify
  $D$ with its image in $M$.  Let us say $Y \epo_X Y'$ exactly when
  $Y \leq M$.  Note that in such a case we will have $Y \leq D$ and
  that the isomorphism invariance of $\leq$ guarantees that this
  definition does not depend on the particular embedding of $D$ into
  $M$.

  We must show that this induces a partial order.  Reflexivity and
  antisymmetry come from the definition of the equivalence classes
  induced by $\epo_X$, noting that we must have
  $\chi_D(X) \in \{\, Y, Y'\,\}$.  For transitivity, we have to show
  that if $Y \epo_X Y'$ and $Y' \epo_X Y''$, then $Y \epo_X Y''$.  Let
  $D = ( Y \oplus_X Y') \oplus_X Y''$ and assume $D$ is a strong
  substructure of $M$.  We assume that each of $Y, Y', Y''$ are in
  different $\epo_X$-equivalence classes and we will show that
  $Y \leq M$.  First note that $\chi_D(X)$ must be exactly one of
  $Y, Y'$ or $Y''$; if not then letting $\chi_D(X) = Z$ we may assume
  that $X \leq (Z \cap Y), X \leq (Z \cap Y')$ and
  $X \leq (Z \cap Y'')$ so by free amalgamation $X \leq Z$, a
  contradiction.  Further, we must have that $X \chi_D(X) \leq D$,
  since by biminimality and free amalgamation we cannot have
  $\chi_D( \chi_D(X)) \in D$.  Thus $X \chi_D(X) \leq M$.  If
  $\chi_D(X) = Y'$, then we have $XY' \leq XYY'$ contradicting that
  $Y,Y'$ are in different $\epo_X$ classes.  Similarly if
  $\chi_D(X) = Y''$.  So we must have $\chi_D(X) = Y$ and
  $XY \leq XYY'$ as desired.
\end{proof}

Intuitively, in classes with $\exists$-resolutions minimal pairs can be
thought of as resolving some undetermined property of $A$.  Once that
property is resolved other potential resolutions add nothing new; thus
one only needs at most one biminimal extension of any finite subset in
a resolution of a finite set $A$.

Note that in such a situation the definition of $\chi$ may not be
preserved by substructures: that is, one might have a class with
$\exists$-closures and models $M \subseteq N$ with $A_\chi$ in $M$
defined differently from $A_\chi$ in $N$.  In fact, it is easy to see
that if $\chi_M(A) = \chi_N(A)$ for every $A \finsubs M$, then
$M \leq N$ (this implication does not reverse since a strong
substructure could allow arbitrary choices of $\chi_M$; the possible
choices $\pi_M$ should be preserved however).  Also note that in
general $\chi$ need not be unique.  For example, in the class of
distanced graphs discussed above, $\chi$ can be chosen so that for
$a,b \in A$, $\chi(\{\, a,b \,\})$ is {\it any} minimal length path
from $a$ to $b$ in $M$.

\subsection{Companions}
We now turn our attention to classes with $\e$-resolutions which are
derived from a given class.

\begin{defn}
  For a given class $(\K, \leq)$, a class $(\K_\exists, \leq_\exists)$
  is an $\exists$-companion of $(\K, \leq)$ if:
  \begin{enumerate}
  \item $\K_\exists = \K$
  \item $(X,Y)$ is a biminimal pair in $(\K, \leq)$ exactly if $(X, Y)$
    is a biminimal pair in $(\K_\exists, \leq_\exists)$
  \item $(\K_\exists, \leq_\exists)$ has $\exists$-closures.
  \end{enumerate}
\end{defn}

It is natural to wonder when $\exists$-companions exist.  For example,
the class of Shelah-Spencer graphs are discussed in \cite{bal-shi,
  bal-shel, mcl}.  For a fixed irrational $\alpha \in (0,1)$, the
class $(\K_\alpha, \leq_\alpha)$ is defined by saying that for graphs
$A \subseteq B$, $A \leq_\alpha B$ exactly when, letting $e(Y/X)$
denote the number of edges in $Y$ but not in $X$, we have
$|B_0 \setminus A| - \alpha e(B_0 / A) > 0$ for every
$A \subseteq B_0 \subseteq B$.  We then let
$\K_\alpha := \{\, A : \emptyset \leq_\alpha A \,\}$.  A biminimal
pair $(X,Y)$ will have $|Y \setminus X| - \alpha e(Y/X) < 0$; if we
had an $\e$-companion with free amalgamation we would be able to form,
for $n \in \omega$, $D_n$ as the free amalgam of $X$ with $n$ copies
of $Y$ and have $D_n \in \K_\alpha$ (one copy would form the
resolution of $X$ and the others would be then be strong extensions of
the pair).  But for sufficiently large values of $n$,
$|D_n| - \alpha e(D_n / \emptyset)$ would be negative, contradicting
the definition of $\K_\alpha$.  On the other hand, this is the main
obstruction as the following lemma shows.

\begin{lem}
  Let $(\K, \leq)$ be an amalgamation class satisfying A1--A6, and let
  $(\K', \leq')$ be formed from $(\K, \leq)$ by imposing an arbitrary
  partial ordering on the biminimal extensions of a fixed $X \in \K$
  and defining $\leq_\e$ in accordance with the definition of a class
  with $\exists$-closures.  Then if $(\K', \leq')$ is an amalgamation
  class it is an $\exists$-companion for $(\K, \leq)$.
\end{lem}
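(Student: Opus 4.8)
The plan is to verify the three defining conditions of an $\e$-companion in turn, with essentially all of the work going into the claim that $(\K', \leq')$ and $(\K, \leq)$ have the same biminimal pairs. Condition~(1), that $\K' = \K$, is immediate, since the construction alters only the strong-substructure relation and not the underlying class of finite structures. The key preliminary observation for the rest is that every $\leq$-strong pair is $\leq'$-strong: each imposed order $\epo_{X'}$ is reflexive (it is discrete for bases not isomorphic to $X$, and is the given partial order on the biminimal extensions of $X$ itself), so whenever the $\forall$-closure condition ``$Y' \subseteq A$ for every biminimal pair $(X', Y')$ with $X' \subseteq A$ and $Y' \subseteq B$'' holds, one witnesses the $\e$-condition simply by taking $Y = Y'$. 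Hence $A \leq B$ implies $A \leq' B$, and so the family of pairs that fail to be strong can only shrink on passing from $\leq$ to $\leq'$.

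Next I would establish the forward inclusion of condition~(2): every $(\K,\leq)$-biminimal pair $(X', Y')$ is $(\K', \leq')$-biminimal. Because each proper $\leq$-strong extension $X' \subseteq Y_0 \subsetneq Y'$ remains $\leq'$-strong, the ``minimal'' half of the definition transfers verbatim. To see $X' \not\leq' Y'$ one tests the $\e$-condition at the pair itself: any rescuing biminimal extension $Y$ would satisfy both $Y \subseteq X'$ and $X' \subseteq Y$, forcing $Y = X'$ and making $(X', X')$ a biminimal pair, which is absurd. Thus no rescue is ever available at the base of a biminimal pair, and the same no-rescue-at-the-base principle applied to its proper sub-pairs upgrades ``minimal'' to ``biminimal'' in $(\K', \leq')$.

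The reverse inclusion is the main obstacle. Here I would take a $(\K', \leq')$-biminimal pair $(P, Q)$ and show it is $(\K, \leq)$-biminimal. Since every $\leq$-strong pair is $\leq'$-strong, $P \not\leq' Q$ yields $P \not\leq Q$, so the Baldwin--Shi characterization available under A6 produces a $(\K,\leq)$-biminimal pair $(X', Y')$ with $X' \subseteq P$, $Y' \subseteq Q$, and $Y' \not\subseteq P$. Minimality of $(P,Q)$ in $(\K', \leq')$ forces this witness to span the whole extension: deleting any $q \in Q \setminus P$ with $q \notin Y'$ would leave the unrescued violation $(X', Y')$ intact and contradict $P \leq' Q \setminus \{q\}$, whence $Q \setminus P \subseteq Y'$. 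The delicate point, and the step I expect to be the crux, is to rule out that $Y'$ also meets $P \setminus X'$ (``straddling''); once $Y' \cap P = X'$ is known, $(X', Y')$ is a genuine sub-pair of $(P,Q)$ in the sense of biminimality, so $(\K', \leq')$-biminimality of $(P,Q)$ collapses $(X', Y')$ onto $(P, Q)$ and finishes the argument. To exclude straddling I would invoke the standing hypothesis that $(\K', \leq')$ is an amalgamation class, using the amalgamation property together with biminimality to decompose a straddling witness into a strictly smaller $(\K', \leq')$-minimal sub-pair, contradicting the biminimality of $(P, Q)$.

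With condition~(2) in hand, condition~(3) follows essentially by construction. The relation $\leq'$ was defined to be exactly the $\e$-resolution relation determined by the biminimal pairs of $(\K, \leq)$ and the orders $\epo_{X'}$; since those biminimal pairs are precisely the biminimal pairs of $(\K', \leq')$, the family $\{\epo_{X'}\}$ witnesses that $(\K', \leq')$ has $\e$-resolutions. Combined with $\K' = \K$ and the coincidence of biminimal pairs, this is exactly the statement that $(\K', \leq')$ is an $\e$-companion of $(\K, \leq)$.
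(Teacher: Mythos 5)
Your plan has the right architecture, and it engages with more than the paper's own proof does: the paper disposes of this lemma in one sentence (``one need only check that axioms A1--A5 hold''), whereas you correctly identify that the substantive content is condition (2) --- that the biminimal pairs computed from $\leq'$ coincide with the $(\K,\leq)$-biminimal pairs used to define $\leq'$ --- and that condition (3) reduces to condition (2). Your preliminary step ($A \leq B$ implies $A \leq' B$, by reflexivity of $\epo_X$ and the A6 characterization of $\leq$) and your ``no rescue at the base'' argument (showing every $(\K,\leq)$-biminimal pair is a $(\K',\leq')$-\emph{minimal} pair) are both correct. However, there is a genuine gap at exactly the step you flag as the crux, and your proposed repair would not close it. The paper's definition of biminimality of $(P,Q)$ only constrains sub-pairs $(X_0, X_0Y_0)$ whose new part $Y_0$ lies inside $Q \setminus P$; a straddling witness cannot be fed to it at all. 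So even if the amalgamation property let you ``decompose a straddling witness into a strictly smaller $(\K',\leq')$-minimal sub-pair,'' that smaller sub-pair could itself straddle, in which case it contradicts nothing: the plan is circular. There is also a smaller slip just before that: your witness $(X',Y')$ is produced by applying the Baldwin--Shi characterization to $P \not\leq Q$, but such a witness may be $\epo_{X'}$-rescuable inside $P$, and then deleting $q \notin Y'$ does not contradict $P \leq' Q\setminus\{q\}$. The witness must be taken directly from the failure of the defining condition of $\leq'$ (an unrescuable violation); since rescues live inside $P \subseteq Q \setminus \{q\}$, unrescuability survives deletion and that step is then fine. The same straddling problem also infects your forward inclusion, where the upgrade from ``minimal'' to ``biminimal'' is waved through as ``the same principle'': the $(\K,\leq)$-biminimal witness extracted from a $(\K',\leq')$-minimal sub-pair can meet $X' \setminus U$, and then biminimality of $(X',Y')$ does not apply to it.

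What actually closes the gap is not amalgamation but a strengthening of Lemma \ref{no-bmp-suff}: if $A \not\leq B$ then there is a biminimal pair $(X,Y)$ with $X \subseteq A$, $Y \subseteq B$ and $Y \cap A = X$, i.e.\ a \emph{non-straddling} witness. This needs only A1--A5: choose $W$ minimal with $A \subseteq W \subseteq B$ and $A \not\leq W$; choose $X \subseteq A$ minimal with $X \not\leq X(W \setminus A)$; choose $Y$ minimal with $X \subseteq Y \subseteq X(W\setminus A)$ and $X \not\leq Y$. Axiom A4 shows any proper minimal sub-pair of $(X,Y)$ with new part in $Y\setminus X$ would violate the minimality of $X$ or of $Y$, so $(X,Y)$ is biminimal, and its new part avoids $A$ by construction. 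With this lemma both inclusions finish cleanly, and your vertex-deletion argument becomes unnecessary: for the reverse inclusion, $P \not\leq' Q$ gives $P \not\leq Q$, hence a non-straddling $(\K,\leq)$-biminimal pair $(X_1,W_1)$ with $X_1 \subseteq P$ and $\emptyset \neq W_1 \setminus X_1 \subseteq Q\setminus P$; by your own no-rescue argument this is a $(\K',\leq')$-minimal pair, so $(\K',\leq')$-biminimality of $(P,Q)$ forces $(X_1,W_1) = (P,Q)$, which says precisely that $(P,Q)$ is $(\K,\leq)$-biminimal. The forward inclusion is repaired the same way, since the non-straddling witness extracted inside $X_0Y_0$ automatically avoids $X'$.
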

\begin{proof}
  One need only check that axioms A1--A5 hold, but this is straightforward.
\end{proof}

For example, it is straightforward to show that the class of distanced
graphs $(\K_d, \leq_d)$ is an $\exists$-companion to the class
$(\K_{C}, \leq_{C})$ of all graphs with $A \leq_{C} B$ exactly when
every path from $a$ to $a'$ with length at least $2$ in $B$ is
contained in $A$.  For a fixed pair $X = \{\, a, a' \,\} \in \K_C$
with no edge $(a, a')$, the biminimal pairs over $X$ will be the set
of all paths between $a$ and $a'$, with the path length inducing the
relevant partial ordering.  A consequence of the previous lemma is
that we could change the partial ordering to achieve a different
$\exists$-companion, provided that the resulting class is an
amalgamation class, as we will see with example \ref{hamilton-ex}.

\begin{lem} Let $(\K, \leq)$ be a full amalgamation class with
  $\forall$-closures.  If $(\K_\exists, \leq_\exists)$ is an
  $\exists$-companion for $(\K, \leq)$ with free and coherent
  amalgamation, then $(\K_\e, \leq_\e)$ is also a full amalgamation
  class.
\end{lem}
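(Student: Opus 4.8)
The plan is to verify fullness directly, since \emph{free} amalgamation of $(\K_\e,\leq_{\e})$ is already assumed. So I would fix $A,B,C\in\K_\e$ with $A\leq_{\e}B$, $A=B\cap C$ and $A\subseteq C$, put $D=B\oplus_A C$, and aim to show $C\leq_{\e}D$. By the biminimal--pair characterization of $\leq_{\e}$ it suffices to take an arbitrary $X\subseteq C$ together with a biminimal pair $(X,Y')$ with $Y'\subseteq D$, and to produce a biminimal pair $(X,Y)$ with $Y\subseteq C$ and $Y\epo_X Y'$. I would argue by induction on $n=|Y'\cap(B\setminus A)|$; when $n=0$ we have $Y'\subseteq C$ and may take $Y=Y'$. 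Throughout I use that the two classes share the same biminimal pairs, so $(X,Y')$ is also biminimal in $(\K,\leq)$.

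For the inductive step write $P=Y'\cap A$ and $W=Y'\cap(B\setminus A)\neq\emptyset$, so that $Y'\cap B=P\cup W$ disjointly. The first key step is the claim that $P\not\leq(Y'\cap B)$ in $(\K,\leq)$. Here I would use fullness of $(\K,\leq)$: since $P=(Y'\cap B)\cap(Y'\cap C)$ and freeness forces $Y'=(Y'\cap B)\oplus_P(Y'\cap C)$, if we had $P\leq(Y'\cap B)$ then fullness of $(\K,\leq)$ would give $Y'\cap C\leq Y'$; but $X\subseteq Y'\cap C\subsetneq Y'$ and $(X,Y')$ is a minimal pair, so $X\leq Y'\cap C$, whence $X\leq Y'$ by A3 --- contradicting $X\not\leq Y'$. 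Thus $P\not\leq(Y'\cap B)$.

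Because $(\K,\leq)$ satisfies A6, the Baldwin--Shi characterization then supplies a biminimal pair inside $Y'\cap B$ whose base lies in $P$ and whose extension meets $W$. The delicate point --- and the step I expect to be the main obstacle --- is to refine this to a biminimal pair $(U,V)$ (with $V$ the extension over $U$, as in the coherence axiom) for which $U\subseteq A$ and $V\subseteq B\setminus A$. To do so I would repeatedly absorb the $A$-points of the current extension into its base: taking the base of a biminimal extension to be \emph{all} of its $A$-points cannot make it strong while new $(B\setminus A)$-points remain (by the same transitivity argument as above, applied to the extension in place of $Y'$), so a strictly smaller biminimal pair persists; by finiteness this terminates with $V\subseteq B\setminus A$. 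Note then $V\cap X=\emptyset$ (as $X\subseteq C$), and with $Z:=Y'\setminus V$ we have $U\subseteq Z$ and $X\subseteq Z$.

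Finally I would invoke the companion hypotheses. Since $A\leq_{\e}B$ and $(U,V)$ is a biminimal pair with base in $A$ and $U\cup V\subseteq B$, there is a biminimal pair $(U,V')$ with $V'\subseteq A$ and $V'\epo_U V$. Coherence of $(\K_\e,\leq_{\e})$ now applies with this $Z$: it yields that $(X,ZV')$ is a biminimal pair and $ZV'\epo_X Y'$. Since $V'\subseteq A$ and $V\subseteq B\setminus A$, we get $|ZV'\cap(B\setminus A)|=|W|-|V|<n$, and $ZV'\subseteq D$ with $X\subseteq ZV'$, so the induction hypothesis produces a biminimal pair $(X,Y)$ with $Y\subseteq C$ and $Y\epo_X ZV'$. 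As $\epo_X$ induces a partial order it is transitive, giving $Y\epo_X Y'$ and completing the induction, hence the proof that $C\leq_{\e}D$ and that $(\K_\e,\leq_{\e})$ is full.
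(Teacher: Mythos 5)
Your global strategy is sound and genuinely different from the paper's: the paper applies coherence once, with $Z = Y' \cap C$, after arguing that the inner biminimal extension exhausts all of $Y' \setminus Z$, whereas you peel off a single inner pair, apply coherence with $Z := Y' \setminus V$, and recurse on $n = |Y' \cap (B \setminus A)|$. Your first claim ($P \not\leq Y' \cap B$, via freeness of $D$, fullness of $(\K, \leq)$, minimality of $(X, Y')$, and A3) is correct, and so is the endgame (the application of $A \leq_\e B$, then coherence, then the induction hypothesis and transitivity of $\epo_X$). The genuine gap is exactly where you predicted it: the absorption loop. At a given stage you have a biminimal pair $(U_k, S_k)$ with $U_k \subseteq A$, $S_k \subseteq Y' \cap B$, and you correctly show $T_k := S_k \cap A \not\leq S_k$; but the extraction tool (Lemma \ref{no-bmp-suff}, or the Baldwin--Shi characterization under A6) then yields only \emph{some} biminimal pair whose base is contained in $T_k$ and whose whole is contained in $S_k$ but not in $T_k$. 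Nothing makes this pair ``strictly smaller'': the pair $(U_k, S_k)$ you already have satisfies all three of those conditions (its base lies in $T_k$, its whole is $S_k$, and $S_k \not\subseteq T_k$ since $S_k$ meets $W$), so the extraction may hand back the very pair you started with, and the loop need not make progress. Nor does any natural measure decrease: the new whole can equal $S_k$, and the new base need not contain $U_k$, so neither the size of the whole nor the number of $A$-points in the extension is forced down. As written, the ``delicate point'' is therefore not established.

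The missing idea --- which is also what the paper's own proof asserts without comment when it produces a biminimal pair $(U, UV)$ with $U \subseteq Z$ and $V \subseteq Y \setminus Z$ --- is to secure disjointness \emph{before} passing to biminimality rather than trying to restore it afterwards. Since $P \not\leq Y' \cap B$, choose $S$ minimal under inclusion with $P \subseteq S \subseteq Y' \cap B$ and $P \not\leq S$; then $(P, S)$ is a minimal pair whose extension $S \setminus P$ is automatically disjoint from $P$, hence contained in $W \subseteq B \setminus A$. Now among all minimal pairs $(U, UV)$ with $U \subseteq P$ and $V \subseteq S \setminus P$ (a nonempty family, since it contains $(P, S)$), pick one in which $(U, V)$ is minimal under coordinatewise inclusion. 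Any violation of biminimality of $(U, UV)$ would be a minimal pair $(U_0, U_0 V_0)$ with $U_0 \subseteq U$ and $V_0 \subseteq V$, which lies in the same family and so contradicts the minimal choice; hence $(U, UV)$ is biminimal with $U \subseteq A$ and $\emptyset \neq V \subseteq B \setminus A$, exactly what your coherence-plus-induction step needs. With this one-step extraction substituted for the absorption loop, your induction goes through, and it in fact handles extensions that weave in and out of $B \setminus A$ several times more transparently than the paper's single application of coherence.
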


\begin{proof}
  We will show that for $A \leq_\e B$ and $A \subseteq C$ we have
  $C \leq_\e D = B \oplus_A C$.  Let $(X, Y)$ be any $(\K_\e, \leq_\e)$
  biminimal pair with $X \subseteq C$ and $Y \subseteq D$.  We have to
  show that there is some $Y'$ with $Y' \epo_X Y$ and
  $Y' \subseteq C$.  We can assume without loss that $X \cap C$ and $Y
  \cap C$
  are not both empty; our proof will be a generalization of the proof
  of full amalgamation for distanced graphs.

  Let $Z = (Y \cap C)$.  Since $(X,Y)$ is a minimal pair, we must have
  that $(Z, Y)$ is as well and in particular that there is a biminimal
  pair $(U, UV)$ with $U \subseteq Z, V \subseteq Y \setminus Z$.  In
  fact, I claim that $U \subseteq A$.  If not, then
  $U \cap (C \setminus A) \neq \emptyset$. Thus $U \cap A \leq UV$ (by
  biminimality) and $U \cap A \subseteq U$, so that by full
  amalgamation we have $U \leq (V \cap B) \oplus_{U \cap A} U = V$, a
  contradiction.  Thus we have $U \subseteq A$ as desired.

  Given this, we have a biminimal pair $(U, UV)$ with $U \subseteq A$
  and $V \subseteq (B \setminus A)$.  Since $(X,Y)$ is biminimal, we
  must have $V \setminus X = Y \setminus X$ (if
  $V \setminus X = Y_0 \subsetneq Y \setminus X$, then $(Z,Y)$ a
  minimal pair implies $Z \leq Y_0$ so
  $Z \cap (UV Y_0) \leq Y_0 \cap (U V Y_0)$ so $U \leq V$ a
  contradiction).  Thus $(U, V)$ is a biminimal pair with $U \subseteq
  A$.  

  Since $A \leq_\e B$, we have some $V' \epo_{U} V$ with
  $V' \subseteq A$.  By coherence we must have $ZV' \epo_X Y$.
\end{proof}

We can also pass from classes with $\e$-resolutions to classes with
$\a$-closures.

\begin{defn}
  Let $(\K, \leq)$ be any amalgamation class satifying A1 -- A5.  We say that an amalgamation class
  $(\K_\forall, \leq_\forall)$ is a $\forall$-companion of
  $(\K, \leq)$ if
  \begin{enumerate}
  \item $\K_\forall = \K$
  \item $(X,Y)$ is a biminimal pair in $(\K, \leq)$ exactly if $(X, Y)$
    is a biminimal pair in $(\K_\exists, \leq_\exists)$
  \item $(\K_\forall, \leq_\forall)$ has $\forall$-closures.
  \end{enumerate}
\end{defn}

The $\forall$-companion always exists and is easily characterized.

\begin{lem}
  For any class $(\K, \leq)$, a class $(\K', \leq')$ is an
  $\forall$-companion of $(\K, \leq)$ if and only if $\K' = \K$ and
  $A \leq' B$ exactly when for $(X,Y)$ a $(\K, \leq)$ minimal-pair
  with $X \subseteq A$ and $Y \subseteq B$ we have $Y \subseteq A$ as
  well.
\end{lem}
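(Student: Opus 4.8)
The plan is to prove both directions by routing through the biminimal-pair version of the displayed relation. Write $A \leq^{*} B$ for the relation on the right-hand side (no $(\K,\leq)$-minimal pair sticks out of $(A,B)$), and $A \leq^{**} B$ for the a priori weaker requirement obtained by quantifying only over \emph{biminimal} pairs. The key preliminary step is that these coincide. One inclusion is immediate, since every biminimal pair is a minimal pair, so $A\leq^{*}B \Rightarrow A\leq^{**}B$. For the reverse, suppose $A \leq^{**} B$ but some minimal pair $(X,Y)$ has $X \subseteq A$, $Y \subseteq B$, $Y \not\subseteq A$; put $W = A \cap Y$, so $X \subseteq W \subsetneq Y$. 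Minimality of $(X,Y)$ gives $X \leq W$, whence $W \not\leq Y$ by A3 (otherwise $X \leq Y$). Then Lemma \ref{no-bmp-suff} applied to $(W,Y)$ produces a biminimal pair $(X',Y')$ with $X' \subseteq W$, $Y' \subseteq Y$, $Y' \not\subseteq W$; since $Y' \subseteq Y$ this forces $Y' \not\subseteq A$, contradicting $A \leq^{**} B$. Hence $\leq^{*} = \leq^{**}$.

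Granting this, the forward direction is clean. If $(\K',\leq')$ is a $\forall$-companion then $\K' = \K$ by definition, and having $\forall$-closures it satisfies A6 by our earlier equivalence of A6 with $\forall$-closures. Baldwin and Shi's characterization of $\leq'$ in classes with A6 (\cite{bal-shi}) then says $A \leq' B$ exactly when no $(\K',\leq')$-biminimal pair sticks out; but the companion is required to have the same biminimal pairs as $(\K,\leq)$, so this reads $A \leq' B \iff A \leq^{**} B \iff A \leq^{*} B$. Thus $\leq' = \leq^{*}$.

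For the converse I would verify that $(\K, \leq^{*})$ is itself a $\forall$-companion. Axioms A1--A5 are routine, and A6 is immediate from the definition of $\leq^{*}$: if a minimal pair lies in $(A \cap X, B \cap X)$ then it lies in $(A, B)$, so its top is contained in $A$ and hence in $A \cap X$. By the earlier equivalence this yields $\forall$-closures once amalgamation is known. The remaining structural requirement is that $(\K,\leq)$ and $(\K,\leq^{*})$ have the same biminimal pairs. The heart of this is the claim that a $(\K,\leq)$-biminimal pair $(X,Y)$ is $\leq^{*}$-minimal, i.e. no biminimal pair sticks out of any proper $(X, Y_0)$ with $X \subseteq Y_0 \subsetneq Y$. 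I would prove this by choosing a counterexample $(U,V)$ with $|V|$ least and examining $(V \cap X, V)$: either it is a minimal pair, hence a proper sub-minimal-pair of $(X,Y)$ contradicting biminimality; or $V \cap X \leq V$, forcing $U \leq V$ by A3 and contradicting minimality of $(U,V)$; or Lemma \ref{no-bmp-suff} yields a biminimal pair with strictly smaller top, contradicting minimality of $|V|$. The same mechanism, applied to the base rather than the top, rules out proper $\leq^{*}$-sub-pairs and handles the reverse inclusion, so the biminimal pairs coincide.

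The step I expect to be the main obstacle is establishing that $(\K, \leq^{*})$ actually has the amalgamation property, which is what underwrites the assertion that the $\forall$-companion always exists. Since $A \leq^{*} B$ implies $A \leq B$ (by Lemma \ref{no-bmp-suff}), the original amalgamation only produces an amalgam that is $\leq$-strong, not $\leq^{*}$-strong, so it cannot be used directly. In the free-amalgamation setting --- which covers the motivating examples, such as the passage from $(\K_d, \leq_d)$ to $(\K_C, \leq_C)$ --- I would instead take $D = B \oplus_A C$ and argue as in the full-amalgamation lemma above that no $(\K,\leq)$-biminimal pair sticks out of $(B,D)$ or $(C,D)$, using that none sticks out of $(A,B)$ or $(A,C)$ together with the free structure of the amalgam. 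The genuinely general case, where $\K$ need not be closed under free amalgams, is the delicate point, and is where I would concentrate the remaining effort.
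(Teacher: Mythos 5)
Your route is the paper's own: reduce everything to the coincidence of biminimal pairs, with Lemma \ref{no-bmp-suff} as the engine. Two of your ingredients are correct and are in fact sharper than what the paper records. The bridge $\leq^{*} = \leq^{**}$ is exactly what is needed to pass between the minimal-pair condition in the statement and the biminimal-pair condition in the definition of a companion, and your proof of it is right. Likewise your case analysis on $(V \cap X, V)$ for the inclusion ``$(\K,\leq)$-biminimal $\Rightarrow$ $\leq^{*}$-minimal'' repairs a point the paper glosses over: biminimality of $(X,Y)$ only speaks about minimal pairs of the form $(X_0, X_0Y_0)$ with $Y_0 \subseteq Y\setminus X$, a form a general offending pair $(U,V)$ need not have, and passing to $(V\cap X, V)$ is precisely how one gets into that form. (One repair in your third case: apply Lemma \ref{no-bmp-suff} not to $(V\cap X, V)$ but to $(V\cap X, W)$ for some $W \subsetneq V$ witnessing $V\cap X \not\leq W$; that is what guarantees a biminimal pair with strictly smaller top.) By contrast, the obstacle on which you propose to spend your remaining effort --- amalgamation for $(\K, \leq^{*})$ --- is not part of this lemma: a $\forall$-companion is by definition an amalgamation \emph{class}, so in the converse direction you are handed a class already assumed to amalgamate and need only verify the three companion conditions. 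That is exactly what the paper's proof does; existence of the companion is deferred to the corollary that follows it.

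The genuine gap is your last claim, that the reverse inclusion follows by ``the same mechanism, applied to the base rather than the top.'' It does not. For a $(\K,\leq^{*})$-biminimal pair $(X,Y)$ the crux is showing $X \not\leq Y$, and this must use $\leq^{*}$-\emph{bi}minimality of $(X,Y)$, which your mechanism never invokes; indeed the corresponding statement for merely $\leq^{*}$-minimal pairs is false. In $(\K_d, \leq_d)$, let $X$ be the path $a$--$m$--$b$ and $Y = X \cup \{m'\}$ with $m'$ adjacent to $a$ and $b$: then $X \leq_d Y$, yet $(X,Y)$ is a $\leq^{*}$-minimal pair, since the $\leq_d$-biminimal pair $(\{a,b\}, \{a,b,m'\})$ sticks out of it (base in $X$, top in $Y$, top not in $X$) and $Y \setminus X$ is a singleton. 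A correct argument, supposing $X \leq Y$ toward a contradiction, runs as follows: (i) by $\leq^{*}$-minimality of $(X,Y)$, every $\leq$-biminimal $(U,V)$ sticking out of $(X,Y)$ has $V \supseteq Y\setminus X$, since otherwise it sticks out of $(X, XV)$ with $XV \subsetneq Y$, contradicting $X \leq^{*} XV$; (ii) hence $(V\cap X, V)$ is itself a $\leq^{*}$-minimal pair, because anything sticking out of $(V\cap X, W)$ with $W \subsetneq V$ has top inside $V$ but not inside $X$, so it sticks out of $(X,Y)$, covers $Y \setminus X$ by (i), and forces $W = V$; (iii) $(V\cap X, V)$ has the special form, so $\leq^{*}$-biminimality of $(X,Y)$ forces $V \cap X = X$ and $V = Y$; (iv) now minimality of $(U,Y)$ gives $U \leq X$, and $X \leq Y$ with axiom A3 gives $U \leq Y$, contradicting that $(U,Y)$ is a minimal pair. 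Once $X \not\leq Y$ is in hand, $X \leq Y_0$ for proper $Y_0$ follows from $\leq^{*} \Rightarrow \leq$, and the upgrade from minimal to biminimal on both sides goes through as in the paper's concluding paragraph. To be fair, this direction is also the thinnest point of the paper's own write-up, which only records the implication $X \leq' Y \Rightarrow X \leq Y$ there; your proposal shares the same soft spot, but asserts it away explicitly, and that assertion is where your proof is missing an idea.
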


\begin{proof}
Suppose $(\K', \leq')$ is a $\forall$-companion of $(\K, \leq)$.  Then
it is clear that $\K' = \K$, we have to show that the condition on $A
\leq' B$ holds.  But this is clear since the notion of biminimal pairs
stays the same in an $\a$-companion and the condition on $\leq'$ is
then equivalent to having $\forall$-closures by definition.

For the converse, suppose that $(\K', \leq')$ is as described.  The
first and third conditions of being a $\forall$-companion are clear;
we need to show that the notions of biminimal pair coincide. We first
show that $(\K, \leq)$ biminimal pairs are $(\K', \leq')$ {\it
  minimal} pairs and vice versa.  We then show that they are in fact
biminimal pairs.

Suppose $(X, Y)$ is a $(\K, \leq)$ biminimal pair.  We want to show
that $X \not \leq' Y$ and for $X \subseteq Y_0 \subsetneq Y$
$X \leq' Y_0$.  That $X \not \leq' Y$ is clear from the definition of
$\leq'$.  If $X \not \leq' Y_0$, then there is a $(\K, \leq)$ minimal
pair $(U, V)$ with $U \subseteq X, V \subseteq Y_0$ and
$V \not \subseteq X$.  By biminimality, we must have $U=X, V=Y$,
contradicting that $Y_0 \subsetneq Y$.  Thus any $(\K, \leq)$
biminimal pair is a $(\K', \leq')$ minimal pair.

Suppose now that $(X, Y)$ is a $(\K', \leq')$ biminimal pair.  We have
to show that $X \not \leq Y$ and $X \leq Y_0$ for
$X \subseteq Y_0 \subsetneq Y$.  If $X \leq' Y$ then for every
$(\K, \leq)$ minimal pair $(U,V)$ with $U \subseteq X, V \subseteq Y$
we have $V \subseteq X$ as well.  But by lemma \ref{no-bimin-suff}
this implies that $X \leq Y$ as well. The same lemma implies that
$X \leq Y_0$ as well (since $X \leq' Y_0$).  Thus any $(\K', \leq')$
biminimal pair is also a $(\K, \leq)$ minimal pair.

Now suppose that $(X,Y)$ is a $(\K, \leq)$ biminimal pair.  By the
above, it is also a $(\K', \leq')$  minimal pair.  If there is a
$(\K', \leq')$ minimal pair $(U, V)$ with $U \subseteq X, V \subseteq
Y$, then there must be a $(\K', \leq')$ biminimal pair $(U_0, V_0)$
with $U_0 \subseteq U, V_0 \subseteq V$.  Since $(U_0, V_0)$ is also
$(\K, \leq)$ minimal, we must have $U_0 = X = U$ and $V_0 = Y = V$ and
$(X,Y)$ is $(\K', \leq')$ biminimal as
required.  The same argument, {\it mutatis mutandis}, shows that
$(\K', \leq')$ biminimal pairs are also $(\K, \leq)$ biminimal.
\end{proof}

\begin{cor}
  For any $(\K, \leq)$ as above, the $\forall$-companion
  $(\K_\forall, \leq_\forall)$ exists and is unique
\end{cor}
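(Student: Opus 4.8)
The plan is to derive both assertions directly from the preceding characterization of $\forall$-companions. Define a relation $\leq_\forall$ on $\K$ by declaring, for $A \subseteq B$ in $\K$, that $A \leq_\forall B$ exactly when every $(\K, \leq)$-minimal pair $(X, Y)$ with $X \subseteq A$ and $Y \subseteq B$ has $Y \subseteq A$; iterating the condition, this is the same as requiring $\mcl_B(A) = A$. Uniqueness is then immediate: the preceding lemma says that any $\forall$-companion $(\K', \leq')$ must satisfy $\K' = \K$ and have $\leq'$ given by precisely this condition, so the underlying class and the strong-embedding relation are completely determined, and any two $\forall$-companions coincide.

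For existence I would exhibit $(\K, \leq_\forall)$ as the witness and invoke the lemma in the direction that turns the defining condition into a $\forall$-companion: once we know that $(\K, \leq_\forall)$ is an amalgamation class, the lemma guarantees that its biminimal pairs agree with those of $(\K, \leq)$ and that it has $\forall$-closures, so it is a $\forall$-companion. The verification that $(\K, \leq_\forall)$ is an amalgamation class is thus the only substantive content. Axioms A1, A2 and A5 are read off the defining condition, and for A3 and A4 it is convenient to use the reformulation $A \leq_\forall B \iff \mcl_B(A) = A$ together with the idempotence of $\mcl$. A useful auxiliary observation throughout is that $\leq_\forall \,\subseteq\, \leq$: since biminimal pairs are in particular minimal pairs, $A \leq_\forall B$ forces every biminimal pair over a subset of $A$ into $A$, so $A \leq B$ by Lemma \ref{no-bmp-suff}.

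The main obstacle is the amalgamation property for $\leq_\forall$, which is strictly stronger than amalgamation for $\leq$ because $\leq_\forall$-strong embeddings demand $\mcl$-closedness. Given $A \leq_\forall B$ and $A \leq_\forall C$, the inclusion $\leq_\forall \subseteq \leq$ produces a $(\K, \leq)$-amalgam $D$ with $B \leq D$ and $C \leq D$; the difficulty is that such a $D$ may contain a minimal (possibly non-biminimal) extension of a subset of $B$ lying in $D \setminus B$, so that $B \leq_\forall D$ fails. The crux is therefore to choose the amalgam so that no such spurious extension appears over $B$ or over $C$, i.e. so that $B$ and $C$ remain $\mcl$-closed in $D$. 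I would attempt this by generalizing the full-amalgamation argument given earlier for distanced graphs: for a minimal pair $(X, Y)$ with $X \subseteq B$ and $Y \subseteq D$, extract a biminimal pair inside it and use $A \leq_\forall C$ (respectively $A \leq_\forall B$) together with a biminimality reduction to show that its new vertices cannot straddle the two sides, forcing $Y \subseteq B$. In the non-free setting this requires a careful choice of amalgam rather than the free amalgam used there, and controlling these cross extensions is the one genuinely nontrivial step; everything else is bookkeeping against the preceding lemma.
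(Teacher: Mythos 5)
Your proposal follows exactly the route the paper takes: define $\leq_\forall$ by the condition in the preceding characterization lemma, read uniqueness off that lemma, and reduce existence to the single claim that $(\K, \leq_\forall)$ so defined is an amalgamation class, at which point the converse direction of the lemma upgrades it to a $\forall$-companion. Your reformulation $A \leq_\forall B$ iff $\mcl_B(A) = A$, and your observation that $\leq_\forall$ refines $\leq$ via Lemma \ref{no-bmp-suff}, are both correct and consistent with the paper's argument. The one divergence is in how the verification is handled: the paper disposes of it in one line (``it is easily checked that $(\K_\forall, \leq_\forall)$ so defined will satisfy A1--A6''), whereas you single out the amalgamation property for $\leq_\forall$ as the substantive step and leave it as a sketch. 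As written, your proof is therefore incomplete at that point; but in fairness, the paper's own proof never carries out this step either---being an amalgamation class means A1--A5 \emph{plus} the amalgamation property, and the paper explicitly checks only the axioms, for which your $\mcl$-idempotence argument is a perfectly adequate substitute.

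Concerning the step you left open: your instinct that it generalizes the distanced-graph argument is right, and your sketch can be completed whenever $(\K, \leq)$ is a \emph{full} amalgamation class, using the free amalgam itself. Suppose $A \leq_\forall B$, $A \leq_\forall C$, and let $(X,Y)$ be a $(\K,\leq)$-minimal pair with $X \subseteq B$, $Y \subseteq D = B \oplus_A C$, $Y \not\subseteq B$. Minimality of the pair gives $X \leq Y \cap B$. If $Y \cap A \not\leq Y \cap C$, extract a minimal pair $(Y \cap A, Z_0)$ with $Z_0 \subseteq Y \cap C$ and $Z_0 \not\subseteq A$; this contradicts $A \leq_\forall C$. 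If instead $Y \cap A \leq Y \cap C$, then since $Y = (Y \cap B) \oplus_{Y \cap A} (Y \cap C)$ inside the free amalgam, fullness gives $Y \cap B \leq Y$, hence $X \leq Y$ by transitivity, contradicting that $(X,Y)$ is a minimal pair. So $Y \subseteq B$, i.e. $B \leq_\forall D$, and symmetrically $C \leq_\forall D$. This parallels the paper's later lemma that a coherent free $\e$-companion of a full class is itself full. For a completely arbitrary amalgamation class, neither your sketch nor the paper's one-liner settles the amalgamation property; so relative to the paper you have matched its argument in substance and been more candid about the one point it leaves to the reader.
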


\begin{proof}
  The previous lemma shows us that if we define
  $(\K_\forall, \leq_\forall)$ to have $\K_\forall = \K$ and
  $A \leq_\a B$ whenever for $(X,Y)$ a $(\K, \leq)$ minimal pair with
  $X \subseteq A, Y \subseteq B$ we have $X \subseteq A$ as well, then
  $(\K_\forall, \leq_\a)$ will be an $\a$-companion as long as it is
  an amalgamation class.  It is easily checked that $(\K_\a, \leq_\a)$
  so defined will satisfy A1--A6.  Uniqueness also follows from the
  previous lemma.

\end{proof}

\def\L{\bold{L}} \def\sqleq{\sqsubseteq} For amalgamation classes
$(\K, \leq)$ and $(\L, \sqleq)$, we will say that $(\K, \leq)$ is
isomorphic to $(\L, \sqleq)$ if there is a bijection $F: \K \to \L$
such that for $A, B \in \K$, $F(A) \sqleq F(B)$ exactly when
$A \leq B$.  Note that it sufficient to have $\K = \L$ and for
$\leq, \sqleq$ to induce the same notion of biminimal pair.

\begin{lem}
\def\L{\bold{L}}
\def\sqleq{\sqsubseteq}
  For any class $(\K, \leq)$ with $\forall$-closures, the
  $\forall$-companion of any $\exists$-companion of $(\K, \leq)$ is
  isomorphic to $(\K, \leq)$.  
\end{lem}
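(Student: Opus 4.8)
The plan is to establish the isomorphism via the identity map. Fix an $\e$-companion $(\K_\e, \leq_\e)$ of $(\K, \leq)$ and let $(\K_\a, \leq_\a)$ be the $\a$-companion of $(\K_\e, \leq_\e)$, which exists and is unique by the corollary above. The notion of isomorphism of amalgamation classes asks for a bijection $F : \K_\a \to \K$ with $F(A) \leq F(B) \iff A \leq_\a B$; since the underlying classes already coincide, it suffices to take $F$ to be the identity and to check that $\leq$ and $\leq_\a$ define the same relation. Equality of the underlying classes is immediate from clause (1) of the two companion definitions: $\K_\e = \K$ and $\K_\a = \K_\e$, whence $\K_\a = \K$.

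The next step is to show that the three classes $(\K, \leq)$, $(\K_\e, \leq_\e)$ and $(\K_\a, \leq_\a)$ all determine the same biminimal pairs, which is a direct chaining of the defining clauses. Clause (2) of being an $\e$-companion says that $(X,Y)$ is biminimal in $(\K_\e, \leq_\e)$ exactly when it is biminimal in $(\K, \leq)$, and clause (2) of being an $\a$-companion says that $(X,Y)$ is biminimal in $(\K_\a, \leq_\a)$ exactly when it is biminimal in $(\K_\e, \leq_\e)$. Composing these two equivalences, the biminimal pairs of $(\K_\a, \leq_\a)$ coincide with those of $(\K, \leq)$.

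The final step converts agreement of biminimal pairs into agreement of the orders, and this is the only place where the hypothesis of $\a$-closures does real work. Both $(\K, \leq)$ and $(\K_\a, \leq_\a)$ have $\a$-closures — the former by hypothesis, the latter by clause (3) of being an $\a$-companion — so both satisfy A6 by the earlier lemma equating A6 with $\a$-closures. For an A6 class the characterization of Baldwin and Shi recalled just before Lemma~\ref{no-bmp-suff} applies: $P \leq Q$ holds exactly when every biminimal pair $(X,Y)$ with $X \subseteq P$ and $Y \subseteq Q$ satisfies $Y \subseteq P$. Since this criterion reads the order off from the biminimal pairs alone, and the biminimal pairs of the two classes coincide by the previous step, $\leq$ and $\leq_\a$ are the same relation on $\K$, so the identity is the desired isomorphism. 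The main obstacle worth flagging is precisely this last step: matching biminimal pairs does \emph{not} in general force the associated orders to agree — indeed the very point of the $\e$-companion construction is that a single base class may carry several distinct companions sharing its biminimal pairs — so the argument genuinely relies on both classes enjoying $\a$-closures. Equivalently, one may phrase the conclusion as follows: since $(\K, \leq)$ has $\a$-closures it is its own $\a$-companion, and because the $\a$-companion is determined by the biminimal-pair structure of its base, the $\a$-companion of $(\K_\e, \leq_\e)$ agrees with that of $(\K, \leq)$, namely $(\K, \leq)$ itself.
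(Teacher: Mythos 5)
Your proposal is correct and takes essentially the same route as the paper: reduce the statement to the agreement of biminimal pairs (obtained by chaining clause (2) of the $\exists$-companion and $\forall$-companion definitions) and then conclude that the two orders coincide under the identity map. You are in fact more careful than the paper at the final step: the paper appeals to its preceding remark that equal classes with equal biminimal pairs are isomorphic, which as literally stated is too strong (an $\exists$-companion of $(\K, \leq)$ shares its biminimal pairs without being isomorphic to it), whereas you correctly observe that the step is legitimate here only because both $(\K, \leq)$ and the $\forall$-companion have $\forall$-closures, so that the A6/Baldwin--Shi criterion recovers each order from its biminimal pairs alone.
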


\begin{proof}
  Let $(\K', \leq')$ denote the $\forall$-companion of some
  $\exists$-companion $(\K, \leq_\exists)$ of $(\K, \leq)$.
  It suffices to show that $(X,Y)$ is a biminimal pair in $(\K, \leq)$
  if and only if $(X,Y)$ is a biminimal pair in $(\K, \leq')$.  But this
  is clear from the definition.
\end{proof}

As the examples which follow will show, a single $\a$-companion can be
associated with multiple (non-isomorphic) $\e$-companions.

\subsection{Examples}

\begin{ex}\label{path-example}
  \def\edge{\sim}
  Let $L$ be the language whose non-logical symbols are $E(x,y)$ and
  $<$.  Let $\K_p$ be the class of all finite $L$-structures in which
  $E$ and $<$ are interpreted so that for $A \in \K_p$
  \begin{itemize}
  \item $E^A$ is symmetric and antireflexive (thus $(A, E)$ is a
    simple graph).
  \item $(A, <)$ is a finite linear order.
  \item For $x,y \in A$, if $A \models E(x,y)$ then either $y$ is the
    successor of $x$ in $(A, <)$ or vice-versa.
  \end{itemize}
  Thus we can view $A$ as a finite linear order with possible edges
  between pairs of successive vertices.  For $x \in A$, we will write
  $x \pm 1$ for the successor or predecessor of $x$ in $A$ (if there
  is one).

  For $A \subseteq B \in \K_p$, let us say that $A \leq_p B$ when the
  only connected components of $B$ with vertices in $A$ are entirely
  contained in $A$.  It is clear that $(\K_p, \leq_p)$ has free
  amalgamation and $\a$-closures.  In the generic, the closure of a
  finite set will be a minimal set of connected components containing
  that set; the generic thus consists of finite paths separated by
  dense linear orders of vertices.

  $(\K_p, \leq_p)$ does not have $\e$-resolutions: for $A \in \K_p$ and
  $x \in A$, if there are edges in $A$ from $x$ to $x+1$ and $x-1$,
  then $(x, x+1)$ and $(x, x-1)$ will both be biminimal pairs that
  must be in the closure of $\{\, x \,\}$. In fact all biminimal pairs
  will be of the form $(x, x y)$ where $y =  \pm 1$ and
  there is an edge from $x$ to $y$.  Thus we have three
  possibilities for an $\e$-companion of $(\K_p, \leq_p)$: we can choose
  $(x-1) \epo_x (x+1)$, or $(x+1) \epo_x (x-1)$, or the two
  possibilities could be equivalent.  In the first case, we will have
  $A \leq_\e B$ when for every $x \in A$:
  \begin{itemize}
  \item If $(x + 1) \in B$ and $B \models E(x, x+1)$, then either $(x+1) \in
    A$ or $(x-1) \in A$.
  \item If $(x - 1) \in B$ and $B \models E(x, x-1)$, then $(x-1) \in
    A$.
  \end{itemize}
  Thus if $A \subseteq B$ with $|A| > 1$ and both are connected, a
  resolution of $A$ in $B$ can be formed by adding all the vertices in
  $B$ which are less than those in $A$ (in the case that
  $A$ is a singleton $x$ and $B = \{\, x, x+1 \,\}$ we would have $B$
  as the only resolution).

  The $(\K_\e, \leq_\e)$-generic will thus consist of countably many
  infinite paths of order type $(\omega, <)$ separated by dense linear
  orders of vertices.  If we had chosen to have $(x+1) \epo_x (x-1)$,
  we would get paths of order type $\omega^*$ and by having them
  equivalent we would get countably many paths of order type
  $(\Z, <)$.

  It is clear that these are all the $\e$-companions for $(\K, \leq)$
  and that $(\K, \leq)$ is the $\a$-companion of each of these (one
  need only notice that the biminimal pairs are pairs $(x, x \pm 1)$
  with an edge between them in each case).

\end{ex}

\begin{ex}
  Let $(\K_d, \leq_d)$ be the class of distanced graphs as above.
  Then biminimal pairs in $(\K_d, \leq_d)$ consist of pairs
  $(\{\, x_0, x_1 \,\}, p)$ where $p$ is a simple path between $x_0$
  and $x_1$ and $x_0, x_1$ are not joined by an edge.  We can form an
  $\a$-companion $(\K_C, \leq_C)$ by saying $A \leq_C B$ if for
  $a_0, a_1 \in A$ every simple path from $a_0$ to $a_1$ of length at
  least 2 in $B$ is also in $A$.  

  We have that $(\K_d, \leq_d)$ is an $\e$-companion of
  $(\K_C, \leq_C)$.  We could attempt to form another $\e$-companion
  $(\K_m, \leq_m)$ by saying $A \leq_m B$ if for every
  $a_0, a_1 \in A$ the {\it longest} simple path from $a_0$ to $a_1$
  in $B$ is no longer than the longest simple path from $a_0$ to $a_1$
  in $A$.  However, the example illustrated below shows that this
  would not be a free amalgamation class.  Let $A$ consist of the five
  vertices on the solid ellipse, let $B = A \cup \{\, b \,\}$ and let
  $C = A \cup \{\, c \,\}$.  Then by inspection we see that
  $A \leq_m B, A \leq_m C$ but $C \not \leq_m B \oplus_A C$ since there is a
  simple path of length $5$ from $c$ to $a$ in the latter structure.

  \smallskip
  \begin{center}
    \includegraphics[width=1in]{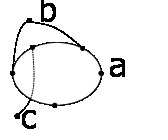}
  \end{center}

\end{ex}

\begin{ex}\label{hamilton-ex}
  Let $\K_H$ be the class of all finite graphs; and for $A \in \K_H$
  define a ``(non)-Hamiltonicity'' function $H$ by
  $H(A) = |A| - \lambda(A)$ where $\lambda(A)$ is the number of
  vertices in the longest simple path in $A$ and for $A \subseteq B$ let
  $H(B/A) = H(B) - H(A)$.  Let us say that $A \leq_H B$ if for every
  $A_0 \subseteq A$ and $B_0 \subseteq B \setminus A$ we have
  $H(A_0 B_0 / A_0) \geq 0$.  Then $(\K_H, \leq_H)$ clearly satisfies A1
  through A6.  
  
  It is not difficult to see that a biminimal pair in this class will
  be a pair $(\{\, x, y \,\}, p)$ where there is no edge from $x$ to
  $y$ and $p$ is a simple path between $x$ and $y$.  Since this is the
  same notion of a biminimal pair that is used in the last example and
  since $(\K_H, \leq_H)$ has $\a$-closures, we see that this class is
  isomorphic to $(\K_C, \leq_C)$.

  While $(\K_C, \leq_C)$ and $(\K_H, \leq_H)$ are isomorphic classes,
  their different definitions allows for some insights into the
  generic $G_{CH}$ of these classes. For example, it is easy to see from
  the definition of $(\K_H, \leq_H)$ that every finite graph $A$ without a
  Hamiltonian path embeds into $G_{HC}$ in such a way that no finite
  extension of $A$ contains a Hamiltonian path either.  The definition
  of $(\K_C, \leq_C)$ makes it clear that any path between vertices
  $\{\, x,y \,\}$ which are joined by an edge must be in the closure
  of  $\{\, x,y \,\}$.  
  
  The definition of $(\K_H, \leq_H)$ also suggests another
  $\e$-companion for these classes besides $(\K_d, \leq_d)$:  we simply treat all paths
  from $x$ to $y$ as equal in the induced partial order.    

\end{ex}

\section{Properties of Classes with $\e$-Resolutions}
In a class with $\exists$-closures, for any $M$ in which $\K$ is
cofinal and $A \finsubs M$, there are two notions of closure in play:
(non-unique) resolutions of $A$ and the unique maximal closure
$\mcl_M(A)$.  We will examine the model-theoretic information given by each
of these closures.

It will be useful in this analysis to make use of some definitions and
results from \cite{brody-intrinsic-trans}.  First we define intrinsic
formulae which will give an approximate description of the maximal
closure of a finite set.

\begin{defn}
\def\a{\bar a} For a fixed amalgamation class $(\K, \leq)$ a
  $0$-intrinsic formula over a finite tuple $\a$ is a formula of the
  form
\begin{equation}
  \label{kint-0}
  \phi(\x; \a) := \Delta_B(\a \x)
\end{equation}
where $(\a_0, B)$ is a minimal pair for $\a_0 \subseteq \a$ and
$\Delta_B(\a \x)$ asserts that $\a_0 \x$ is isomorphic to $B$.

Having defined $k$-intrinic formulae, we define a $k+1$-intrinsic
formula to be of the form
\begin{equation}
  \label{kint-ind}
  \def\w{\bar w}
  \phi(\x; \a) := \Delta_B(\a \x) \land \bigland_{i < m} \exists \w_i
  \phi_i(\w_i; \a \x) \land \bigland_{j < n} \neg \exists \z_j
  \psi_j(\z_j; \a \x)
\end{equation} where again $(\a_0, B)$ is a minimal pair for some $a_0
\subseteq A$ and the
$\phi_i, \psi_j$ are $k$-minimal formulae. 

We will call a formula $\phi(\x; \a)$ {\it intrinsic} over $\a$ when
it is $k$-intrinsic over $\a$ for some $k \in \omega$
\end{defn}

We get a full elementary description of a finite set's maximal closure
by passing to the {\it closure type}.

\begin{defn}
  \def\w{\bar w} \def\m{\bar m} \def\a{\bar a} Let $(\K, \leq)$ be an
  amalgamation class and let $\C$ be a monster model for the theory of
  the $(\K, \leq)$-generic (that is, a ``sufficiently'' saturated
  model of the theory of the ${(\K, \leq)}$ generic).  For any fixed
  tuple $\a \finsubs \C$, the closure-type of $\a$, denoted
  $\cltp(\a)$, is defined by \[
  \begin{aligned}
    \cltp(\a) = &\{\, \exists \x \phi(\x; \w) : \phi
    \text{ is intrinsic over } \w, \C \models \exists \x \phi(\x; \a) \,\}
    \cup \\ &\{\, \neg \exists \y \psi(\y; \w) : \psi
    \text{ is intrinsic over } \w, \C \models \neg \exists \y \psi(\y;
    \a) \,\} 
  \end{aligned}
  \]
  
  For fixed $\a$ and $M \subseteq \C$ any set, the closure type of $\a$ over $M$
  is defined by $\cltp(\a / M) = \bigcup_{\m \finsubs M} \cltp(\a \m)$.
\end{defn}

\begin{lem}\label{cltp-fin}
  \def\a{\bar a} Suppose $(\K, \leq)$ is a full amalgamation class and
  $\C$ is an associated monster model.  Suppose $\a, \b \finsubs \C$
  are finite tuples with $\cltp(\a) = \cltp(\b)$.  Fix
  $a_0 \in \C \setminus \cl(\a)$, and let $\Sigma$ be a finite
  fragment of $\cltp(\a a_0)$.  Then there is a {\bf finite}
  $D \subseteq \cl(\a a_0)$ such that if $f: D \rightarrow \C$ is an
  embedding of $D$ with $f: \a \mapsto \b$ and
  $\cl_\C(f(D)) = \cl_\C(\b) \oplus_{\b} f(D)$, then there is some
  $b_0 \in f(D)$ so that $\C \models \Sigma(\b b_0)$.
\end{lem}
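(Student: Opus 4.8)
The plan is to show that the finite fragment $\Sigma$ only ``sees'' a bounded-depth piece of $\cl(\aa a_0)$, to collect that piece into $D$, and then to use the free-amalgamation hypothesis on $f$ to transfer the truth of $\Sigma$ from $\aa a_0$ to $\b b_0$, where $b_0 := f(a_0)$. First I would unpack $\Sigma$ as a finite conjunction of positive conjuncts $\e\x\,\phi_i(\x;\aa a_0)$ and negative conjuncts $\neg\e\y\,\psi_j(\y;\aa a_0)$, with each $\phi_i,\psi_j$ intrinsic of rank at most some fixed $k$. For every existential intrinsic subformula occurring positively anywhere in $\Sigma$ (including those nested inside the $\phi_i$), a witness can be found inside $\cl(\aa a_0)$; I would let $D$ consist of $\aa$, $a_0$, and all of these finitely many finite witness tuples. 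Since $\Sigma$ is finite and $k$ is bounded, $D$ is a finite subset of $\cl(\aa a_0)$ containing $\aa a_0$, so that $f(\aa) = \b$ makes sense and $b_0 = f(a_0) \in f(D)$.

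For the positive conjuncts the transfer is comparatively soft. Each witness lies in $D$, so its image under $f$ lies in $f(D)$; because $f$ is an embedding it preserves the quantifier-free ``$\Delta_B$'' skeleton of each intrinsic formula, and the hypothesis $\cl_\C(f(D)) = \cl_\C(\b)\oplus_{\b} f(D)$ guarantees that this skeleton sits strongly inside $\C$, so the realized minimal-pair structure is genuine. The negative parts internal to each $\phi_i$ are of strictly smaller rank and are handled by the same mechanism as the top-level negative conjuncts, so the entire positive half reduces, by induction on intrinsic rank, to the claim below.

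The heart of the argument --- and the step I expect to be the main obstacle --- is the transfer of negative intrinsic statements: if $\psi$ is intrinsic of rank at most $k$ and $\C \models \neg\e\y\,\psi(\y;\aa a_0)$, then $\C \models \neg\e\y\,\psi(\y;\b b_0)$. I would prove this by contradiction. A realization $\e$ of $\psi(\y;\b b_0)$ must lie in $\cl_\C(\b b_0) \subseteq \cl_\C(f(D)) = \cl_\C(\b)\oplus_{\b} f(D)$, so it splits into a part in $\cl_\C(\b)$ and a part in $f(D)$. The decisive point is that a biminimal pair, and hence any intrinsic configuration, is indecomposable across a free amalgam: since relations between $\cl_\C(\b)\setminus\b$ and $f(D)\setminus\b$ pass only through $\b$, the chain of biminimal extensions witnessing $\psi$ over $\b b_0$, anchored at $b_0 \in f(D)$, must lie on the $f(D)$-side, while any portion meeting $\cl_\C(\b)$ is attached over $\b$ alone. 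Pulling the $f(D)$-side back through $f^{-1}$ produces an intrinsic configuration over $\aa a_0$, and here the hypothesis $\cltp(\aa) = \cltp(\b)$ is used to match the $\cl_\C(\b)$-elements involved with corresponding elements of $\cl_\C(\aa)$; together these yield a realization of $\psi$ over $\aa a_0$, contradicting $\C \models \neg\e\y\,\psi(\y;\aa a_0)$.

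The two facts I would need to pin down carefully are exactly the indecomposability of biminimal extensions over free amalgams and the preservation of intrinsic formulae under strong, free embeddings; both should follow from the machinery of \cite{brody-intrinsic-trans}, which I would cite or re-derive. The remaining bookkeeping --- that the induction on rank is well-founded, and that finitely many witnesses genuinely suffice to certify all of $\Sigma$ --- is routine once the negative-transfer claim is in hand.
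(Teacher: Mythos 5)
You have correctly reduced the lemma to the transfer of negative intrinsic statements, and confining any hypothetical realization of $\psi(\bar y;\bar b b_0)$ inside $\cl_\C(f(D)) = \cl_\C(\bar b) \oplus_{\bar b} f(D)$ is the right opening move. (For comparison: the paper gives no argument here at all --- it cites \cite{brody-intrinsic-trans} and merely checks that A6 is not used in those proofs --- so any substantive attempt is ``a different route''; the question is whether yours closes.) It does not, and the gap is in the reconstruction step of your negative transfer. Suppose $\bar e$ realizes $\psi(\bar y;\bar b b_0)$ and splits as $\bar e_1 \subseteq \cl_\C(\bar b)\setminus \bar b$ and $\bar e_2 \subseteq f(D)$. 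You pull $\bar e_2$ back through $f^{-1}$, use $\cltp(\bar a)=\cltp(\bar b)$ to find $\bar d_1 \subseteq \cl_\C(\bar a)$ ``corresponding to'' $\bar e_1$, and declare $\bar d_1\, f^{-1}(\bar e_2)$ a realization of $\psi(\bar y;\bar a a_0)$. But the quantifier-free part $\Delta_B$ of an intrinsic formula prescribes a \emph{complete} isomorphism type, including the absence of relations. On the $\bar b$-side, the absence of relations between $\bar e_1$ and $b_0\bar e_2$ is exactly the freeness hypothesis imposed on $f$; on the $\bar a$-side there is no corresponding hypothesis at all: $\cl_\C(\bar a)$ and $D$ both sit inside $\cl_\C(\bar a a_0)$ and may be linked by arbitrary relations --- in the distanced-graph case, $a_0$ can be adjacent to a vertex of $\cl_\C(\bar a)$ even though $a_0 \notin \cl(\bar a)$. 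So the tuple you assemble need not satisfy $\Delta_B$, and no contradiction follows. The lemma's hypotheses are deliberately asymmetric (freeness is demanded only of the image side), and your argument silently uses the symmetric statement.

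A second, related gap: even granting indecomposability of a single biminimal pair over a free amalgam (which does follow from the amalgamation axioms, by the same argument as in the paper's full-amalgamation lemma), what you actually need is that a full intrinsic formula --- whose nested positive and negative clauses are over parameters mixing both sides of the amalgam --- \emph{factors} into an intrinsic condition over $\bar b$ alone (so that $\cltp(\bar a)=\cltp(\bar b)$ can be applied to it) together with an intrinsic condition over $f(D)$ alone. That factorization is an induction of essentially the same depth as the lemma itself; without it, the existence of $\bar e_1$ with its joint properties is not visibly an element of $\cltp(\bar b)$, so the matching step is not licensed. These are precisely the two points you defer to \cite{brody-intrinsic-trans}, but they are not routine: a repair would have to place the replacement for $\bar e_1$ not inside $\cl_\C(\bar a)$ but as a fresh configuration embedded freely over $\bar a a_0 f^{-1}(\bar e_2)$, using saturation of $\C$ and full amalgamation in the theory of the generic, and then verify that such a fresh copy still satisfies the nested negative clauses --- which is exactly the work the sketch omits.
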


\begin{prop}\label{closed-elem}   Let $\C$ denote a sufficiently saturated model of the theory of the
  $(\K, \leq)$-generic, where $(\K, \leq)$ is a full amalgamation
  class.  Let $M \subseteq \C$ with $|M| < |\C|$.  Let
  $a, b \in \C$ and suppose $\cltp(a/M) = \cltp(b/M)$.  Then
  $\tp_\C(a/M) = \tp_\C(b / M)$
\end{prop}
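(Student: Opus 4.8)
The plan is to produce an automorphism $\sigma$ of $\C$ fixing $M$ pointwise with $\sigma(a)=b$; once we have such a $\sigma$, we immediately get $\tp_\C(a/M)=\tp_\C(b/M)$, which is the conclusion. I will build $\sigma$ by back-and-forth, seeding the system with the partial map $p_0 := \mathrm{id}_M \cup \{(a,b)\}$. The hypothesis $\cltp(a/M)=\cltp(b/M)$ is exactly what certifies $p_0$ as an admissible seed: for every finite $\bar m \subseteq M$ it gives $\cltp(a\bar m)=\cltp(b\bar m)$, so the maximal closures $\cl(a\bar m)$ and $\cl(b\bar m)$ carry the same intrinsic data and, both being strong in $\C$, are isomorphic over $\bar m$ by a map sending $a$ to $b$. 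The only model-theoretic input I need beyond this is that $\C$ is $|M|^{+}$-saturated, which holds because $|M|<|\C|$.

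The invariant I maintain is that each partial map $p$ in the system extends $p_0$, has domain of size $<|\C|$ (and finite modulo $M$), and is \emph{closure-type preserving}: for every finite $\c \subseteq \dom(p)$ one has $\cltp(\c)=\cltp(p(\c))$, and $p \rest \c$ extends to an isomorphism $\cl(\c)\to\cl(p(\c))$ that is strong in $\C$ on both sides. In particular each such $p$ is a partial isomorphism. Since $L$ is relational, the union of a back-and-forth chain of such partial isomorphisms that is cofinal in $\C$ is automatically an automorphism of $\C$; the closure-type bookkeeping is there solely to keep the system extendable. Thus everything reduces to proving the two extension steps, and by symmetry I describe only the forth step.

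The heart of the construction is this extension step, where Lemma \ref{cltp-fin} does the work. Suppose $p$ is admissible with $\dom(p)=M\c$ (for a finite $\c$), and $c\in\C$ is to be added. If $c\in\cl(\c)$, then $c$ already lies inside a closure that has been matched, and its image is located in $\cl(p(\c))$ through the existing strong isomorphism. Otherwise $c\notin\cl(\c)$, and for each finite fragment $\Sigma$ of $\cltp(\c c)$ Lemma \ref{cltp-fin} supplies a finite $D\subseteq\cl(\c c)$ such that any embedding $f$ of $D$ into $\C$ with $f(\c)=p(\c)$ and $\cl(f(D))=\cl(p(\c))\oplus_{p(\c)}f(D)$ yields some $b_0\in f(D)$ with $\C\models\Sigma(p(\c)\,b_0)$. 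Such freely placed configurations are consistent in the theory of the generic precisely because $(\K,\leq)$ is a \emph{full} amalgamation class, so a free amalgam over a strong base is again strong; hence by $|M|^{+}$-saturation they are realized in $\C$. Running this over all finite $\Sigma$ — again invoking saturation to realize a single type collecting all these finitary demands together with the free-placement condition — produces one $c'$ with $\cltp(\c c)=\cltp(p(\c)\,c')$, whose maximal closure is therefore strongly isomorphic to that of $\c c$. Then $p\cup\{(c,c')\}$ is admissible, and taking the union of the resulting chain gives the desired $\sigma$.

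I expect the extension step to be the main obstacle. The difficulty is not in finding \emph{some} image $c'$ but in guaranteeing that adding it does not create or destroy minimal-pair (intrinsic) structure over the range that is not mirrored over the domain, so that the map stays closure-type preserving rather than merely preserving finitely many atomic formulas. Full amalgamation is exactly what controls this: it lets me insist that the newly added closure sit freely over the already-matched strong closure $\cl(p(\c))$, so that no unintended obstructions appear on the range side, while Lemma \ref{cltp-fin} reduces the infinitary requirement ``$c'$ realizes the full closure type of $\c c$'' to a family of finitary requirements that saturation can meet. The delicate point to verify carefully is that at every finite stage the required free strong extension is genuinely available in $\C$ — i.e.\ that fullness of $(\K,\leq)$ is never silently violated as the matched closures grow.
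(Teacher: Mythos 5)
Your core extension step --- matching $\cltp(\c c)$ with $\cltp(p(\c)\,c')$ by combining Lemma \ref{cltp-fin}, $|M|^{+}$-saturation, and fullness to place the new closure freely over the already-matched one --- is the right mechanism, and it is essentially the argument the paper itself outsources: the paper's own ``proof'' of Proposition \ref{closed-elem} is just a citation to \cite{brody-intrinsic-trans} together with the remark that A6 is never used there, and Lemma \ref{cltp-fin} is stated immediately beforehand precisely because it is the engine of that argument.

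However, the final step as you have written it fails. You cannot simultaneously maintain the invariant that every map in your system has domain finite modulo $M$ and obtain a chain whose union is an automorphism of $\C$: the union of any chain of such maps has domain of size at most $|M|$ plus the length of the chain, and since $|M| < |\C|$, exhausting $\C$ would force maps in the chain to have domains of size at least $|M|^{+}$ modulo $M$, destroying the invariant; the extension steps at those later stages would then require saturation far beyond the $|M|^{+}$-saturation you allow yourself (``sufficiently saturated'' need not mean saturated, let alone strongly homogeneous, in its own cardinality). The repair is standard and costs nothing: you do not need an automorphism at all. A family of partial isomorphisms containing $p_0 = \operatorname{id}_M \cup \{(a,b)\}$ and closed under your forth and back extension steps already implies, by induction on formulas (the Ehrenfeucht--\fraisse\ argument: the atomic case from each map being a partial isomorphism, the quantifier case from the extension property), that every map in the family preserves all first-order formulas over its domain; applying this to $p_0$ gives $\tp_\C(a/M) = \tp_\C(b/M)$ directly. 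With that substitution your argument goes through; without it, the object you promise to build does not exist under your stated hypotheses.
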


\begin{proof}
  These are proved in \cite{brody-intrinsic-trans} under the
  assumptions of  A1--A6.  It is an easy check that A6 is not used in
  the proofs.
\end{proof}

One natural question one might ask is what a resolution of $A$ tells
us about how $A$ sits inside of $M$ versus what $\mcl_M(A)$ tells us.
A resolution gives the possibilities for $\mcl_M(A)$.  Somewhat
surprisingly, it turns out that {\it in the generic}, they provide the
same information in that a single resolution of $A$ will determine
$\mcl_M(A)$.  Under full amalgamation and $\exists$-closures we make
this precise in the following.

\begin{prop}
  Let $(\K, \leq)$ be any full amalgamation class, let $\C$ be a
  sufficiently saturated model of the theory of the
  $(\K, \leq)$-generic, and let $M \subseteq \C$ be the $(\K, \leq)$-generic.
  \begin{enumerate}
  \item Let $A, A'$ be finite substructures of $\C$ with an isomorphism
    $f:  A \to A'$ which extends to an isomorphism 
    ${\hat f}:  \mcl_\C(AM) \isom \mcl_\C(A'M)$.  Then $\tp_\C(A/M) =
    \tp_\C(A' / M)$.  

    Further, it suffices that the structure $(\mcl_\C(AM), A)$ (this is
    the structure with universe $\mcl_\C(AM)$ and parameter set $A$) be elementarily
    equivalent to $(\mcl_\C(A'M), A')$.
  \item Let $A, A' \finsubs M$.  If there are finite resolutions
    $B, B'$ of $A,A'$ respectively and an isomorphism $f: B \isom B'$
    with $f(A) = A'$, then $\tp_M(A) = \tp_M(A')$
  \end{enumerate}
\end{prop}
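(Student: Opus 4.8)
The plan is to reduce type-equality to closure-type equality in part (1), and to invoke homogeneity of the generic directly in part (2).

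For part (1), I would first argue that the hypothesis yields $\cltp(A/M) = \cltp(A'/M)$. Reading $\hat f$ as fixing $M$ pointwise (the natural reading, since the conclusion concerns types over $M$), $\hat f$ is an $M$-isomorphism of $\mcl_\C(AM)$ onto $\mcl_\C(A'M)$ carrying $A$ to $A'$. The key point is the absoluteness of intrinsic formulae: since $\mcl_\C(AM)$ is closed under the maximal-closure operation, any minimal-pair configuration witnessing an intrinsic formula over parameters from $\mcl_\C(AM)$ already lies in $\mcl_\C(AM)$, so by induction on intrinsic complexity the existence of realizations of intrinsic formulae with parameters in $\mcl_\C(AM)$ is absolute between $\mcl_\C(AM)$ and $\C$. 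Hence for every intrinsic $\phi$ and $\bar m \finsubs M$ we have $\C \models \exists \bar x\,\phi(\bar x; A\bar m)$ iff $\mcl_\C(AM) \models \exists \bar x\,\phi(\bar x; A\bar m)$, and because $\hat f$ is an $M$-isomorphism sending $A$ to $A'$ it preserves each such sentence, giving $\cltp(A\bar m) = \cltp(A'\bar m)$ for all $\bar m \finsubs M$, i.e. $\cltp(A/M) = \cltp(A'/M)$. I would then apply Proposition \ref{closed-elem} in its tuple form to conclude $\tp_\C(A/M) = \tp_\C(A'/M)$. The tuple form follows from the stated one-element case by induction on $|A|$: writing $A = \hat A a$ and $A' = \hat A' a'$, restriction of closure types matches $\hat A$ to $\hat A'$, so by induction an $M$-automorphism $\tau$ carries $\hat A$ to $\hat A'$; then $\cltp(\tau(a)/M\hat A') = \cltp(a'/M\hat A')$ and the one-element case over the small parameter set $M\hat A'$ gives $\tp_\C(\tau(a)/M\hat A') = \tp_\C(a'/M\hat A')$, and composing automorphisms yields the claim.

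For the \emph{Further} clause, the observation is that the argument above is entirely first-order in the closure structure: it referenced only whether $\mcl$ satisfies certain intrinsic (hence first-order) sentences with $A$ as parameters. Thus elementary equivalence of $(\mcl_\C(AM), A)$ and $(\mcl_\C(A'M), A')$ already pins down which intrinsic configurations occur over $A$ versus $A'$, which is exactly what $\cltp$ records. The step I expect to be the main obstacle is that $\cltp(A/M)$ refers to the specific parameters $\bar m \in M$, whereas elementary equivalence does not name the elements of $M$: matching an intrinsic configuration over $A\bar m$ only produces, a priori, \emph{some} tuple in $\mcl_\C(A'M)$ with the right relation to $A'$, not the given $\bar m$. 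Passing from such a tuple back to $\bar m$ itself is where I would use the homogeneity of the generic $M$ (every $\bar m \finsubs M$ is determined up to $\Aut(M)$ by its closure type) together with Lemma \ref{cltp-fin} to realize the matched configuration over the prescribed parameters; once the specific parameters are matched we again have $\cltp(A/M) = \cltp(A'/M)$ and conclude as before.

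For part (2), I would argue directly from homogeneity of the generic rather than through maximal closures, which is precisely the payoff that a single finite resolution suffices. A resolution $B$ of $A$ satisfies $A \subseteq B \leq M$ with $B$ finite, and likewise $B' \leq M$; thus $f: B \isom B'$ is an isomorphism between finite strong substructures of the generic with $f(A) = A'$. The generic is homogeneous for such maps: by a back-and-forth using (Finite closures) to pick, at each stage, a finite $C$ with $B_n \subseteq C \leq M$ absorbing the next point, and (Injectivity) to transport the resulting abstract strong extension $B_n' \leq \bar C$ across $f$ into $M$, the map $f$ extends to an automorphism $\sigma$ of $M$; note this uses neither uniqueness of closures nor A6. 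Since $\sigma(A) = A'$ we obtain $\tp_M(A) = \tp_M(A')$. The only point to check carefully is that the back-and-forth goes through without A6, i.e. that (Finite closures) and (Injectivity) apply to the non-unique-closure setting at each stage, and both hold directly from the defining properties of the generic.
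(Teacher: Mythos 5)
Your treatment of both numbered claims is correct and follows essentially the paper's own route. For item (1) the paper's entire proof is a citation of Proposition \ref{closed-elem}; your argument is that citation with the omitted steps made explicit: the (necessary) reading of $\hat f$ as fixing $M$ pointwise, the absoluteness of intrinsic formulae between $\mcl_\C(AM)$ and $\C$ (valid, since $\mcl$ is by construction closed under adjoining minimal pairs over its finite subsets), and the induction lifting the one-element statement of Proposition \ref{closed-elem} to tuples using strong homogeneity of $\C$. For item (2) the paper uses injectivity and finite closures to give the existential player a winning strategy in finite Ehrenfeucht--\fraisse\ games and then cites the \fraisse--Hintikka theorem; you run the same back-and-forth to completion and produce an automorphism of the countable generic carrying $A$ to $A'$. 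This is the same argument in substance (yours is formally stronger, using countability of $M$), and you are right that no step uses A6 or uniqueness of closures.

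The one genuine gap is the ``Further'' clause of item (1), which you flag but do not close. Elementary equivalence of $(\mcl_\C(AM),A)$ and $(\mcl_\C(A'M),A')$ matches an intrinsic configuration over $A\bar m$ only with a configuration over $A'\bar m'$ for some tuple $\bar m'$ of the second structure satisfying whatever first-order description of $\bar m$ was encoded in the sentence; as you note, $\bar m'$ need not be $\bar m$, and indeed it need not even lie in $M$. Your proposed repair---an automorphism of $M$ carrying $\bar m'$ to $\bar m$, obtained from homogeneity and Lemma \ref{cltp-fin}---does not finish the job: such an automorphism, and any extension of it to $\C$, is not required to fix $A'$, so it cannot transport the intrinsic configuration over $A'\bar m'$ to one over $A'\bar m$, which is exactly what equality of $\cltp(A/M)$ and $\cltp(A'/M)$ demands. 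Some additional idea tying the two copies of $M$ together parameter-by-parameter is needed; alternatively, if the clause is read with parameter set $AM$ rather than $A$, it follows at once from your main argument. In fairness, the paper's one-line proof does not address this clause at all, so your proposal at least localizes where the real work lies.
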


\begin{proof}
  The first follows from Proposition \ref{closed-elem}.  For the
  second, note that $B \leq \mcl_M(A)$ and $B' \leq \mcl_M(A')$.  Thus
  by injectivity and finite closures in $M$ we can easily construct a
  winning strategy for the player $\e$ in any finite length
  Ehrenfeucht-Fraisse game.  By the Fraisse-Hintikka theorem, we have
  $\tp_M(A) = \tp_M(A')$
\end{proof}

It is also fairly easy to see that in classes with $\e$-resolutions,
finite sets have countable resolutions.  As a result, the generic of
such a class will have a superstable theory.

\begin{thm}
  Let $(\K, \leq)$ satisfy A1--A5 and have $\e$-resolutions.  Then
  \begin{enumerate}
  \item If $M$ is any structure with $\K$ cofinal in $M$ and $A
    \finsubs M$, there is a countable resolution $B$ of $A$ in $M$.
  \item If $(\K, \leq)$ also has  full amalgamation, then the theory of the $(\K, \leq)$-generic is superstable.
  \end{enumerate}
\end{thm}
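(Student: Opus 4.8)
The plan is to prove the two parts separately: part (1) by a direct $\omega$-stage construction, and part (2) by a type-counting argument that rests on part (1) together with Proposition \ref{closed-elem}.

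\smallskip

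For part (1) I would build the resolution $B$ in stages. Since $L$ is a finite relational language there are only countably many isomorphism types of finite $L$-structures, hence for each finite $X$ only countably many isomorphism types (over $X$) of biminimal extensions $(X,Y)$. Put $B_0 = A$, and given a countable $B_n$ let $B_{n+1}$ be $B_n$ together with, for each finite $X \subseteq B_n$ and each isomorphism type of biminimal pair $(X,Y)$ realized with $Y \finsubs M$, a single chosen witness $Y \finsubs M$ of that type (one may take $Y$ to be $\epo_X$-minimal among those realized in $M$, but this is not needed for countability). As $B_n$ is countable it has countably many finite subsets, and over each we adjoin countably many finite sets, so $B_{n+1}$ is countable; hence $B = \bigcup_n B_n$ is countable and $A \subseteq B$. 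To see $B \leq M$, let $(X,Y)$ be any biminimal pair with $X \finsubs B$ and $Y \finsubs M$. Then $X \subseteq B_n$ for some $n$, and at stage $n+1$ we adjoined a witness $Y' \finsubs B$ with $(X,Y')$ isomorphic to $(X,Y)$ over $X$ (or $\epo_X$-smaller); since $\epo_X$ is isomorphism invariant we get $Y' \epo_X Y$ with $Y' \subseteq B$. By the definition of $\leq$ between structures in which $\K$ is cofinal, this is exactly $B \leq M$.

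\smallskip

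For part (2), let $\C$ be a sufficiently saturated model of the theory of the generic and fix $M \prec \C$ with $|M| = \lambda \geq 2^{\aleph_0}$; the goal is $|S_1(M)| \leq \lambda$, which for all such $\lambda$ is superstability. By Proposition \ref{closed-elem}, $\tp_\C(a/M)$ is determined by $\cltp(a/M)$, so it suffices to bound the number of closure types $\cltp(a/M) = \bigcup_{\bar m \finsubs M}\cltp(a\bar m)$. Each $\cltp(a\bar m)$ is governed by the intrinsic formulae holding of $a\bar m$, i.e. by the isomorphism type over $a\bar m$ of a resolution of $a\bar m$; by part (1) (and finite closures in the generic) such a resolution may be taken countable. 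As there are only countably many intrinsic formula-schemes and countably many isomorphism types of finite structures, for each fixed length there are at most $2^{\aleph_0}$ possible values of $\cltp(a\bar m)$.

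\smallskip

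The principal obstacle is converting this local control of each $\cltp(a\bar m)$ into a global count meeting the superstability threshold. Treating $\cltp(a/M)$ as an arbitrary coherent assignment $\bar m \mapsto \cltp(a\bar m)$ yields only the bound $\lambda^{\aleph_0}$, which gives $\lambda$-stability merely when $\lambda^{\aleph_0}=\lambda$, i.e. stability rather than superstability. To close the gap I would show that $\cltp(a/M)$ is \emph{based on a finite subset} $M_0 \subseteq M$: since every resolution is assembled by adjoining biminimal extensions each over a finite base, one argues, using full (hence free) amalgamation, that once the finitely many biminimal extensions attaching $a$ to $M$ have been captured over some finite $M_0$, the remaining closure configurations over $a$ and $M$ are obtained by free amalgamation over $M_0$ and contribute nothing new; coherence of $\epo$ guarantees that passing to $\epo$-minimal witnesses does not disturb the resulting closure type. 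Granting this, $\cltp(a/M)$ is determined by a finite tuple from $M$ together with a closure type over it, whence $|S_1(M)| \leq \lambda \cdot 2^{\aleph_0} = \lambda$. The delicate point — and the step I expect to require the most care — is precisely isolating the finite $M_0$ over which the closure type of $a$ stabilizes, which is where the hypotheses of $\exists$-resolutions and full amalgamation must be used in an essential way rather than the weaker $\forall$-closures.
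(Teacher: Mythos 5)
Your part (1) is correct and is essentially the paper's own argument: the same $\omega$-stage union, adjoining biminimal witnesses over all finite subsets at each stage. The only differences are bookkeeping: the paper adjoins a single $\epo_X$-minimal witness per finite $X$ (so each stage stays finite), while you adjoin one witness per isomorphism type realized in $M$ (so each stage is countable), and each variant leans on a mild unstated property of $\epo_X$ (yours: isomorphism invariance of $\epo_X$; the paper's: that an $\epo_X$-minimal extension realized in $M$ lies $\epo_X$-below every extension realized in $M$).

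The genuine gap is in part (2), at exactly the step you flag as delicate: the claim that $\cltp(a/M)$ is based on a finite $M_0 \finsubs M$. Your proposal never proves this, and it cannot be proved, because it is false --- and with it part (2) of the theorem itself. Take $(\K, \leq) = (\K_d, \leq_d)$, which satisfies every hypothesis (A1--A5, $\e$-resolutions, full amalgamation, even finitary $\e$-resolutions). By compactness there is $M \models \Th(G_d)$ of size $\lambda$ containing a vertex $c$ with $\lambda$ pairwise non-adjacent neighbours $(x_i)_{i < \lambda}$, since $G_d$ contains closed $n$-stars for every $n$. For nonempty $S \subseteq \lambda$ let $p_S(v)$ assert $d(v, x_i) = 1$ for $i \in S$ and $d(v, x_i) = 3$ for $i \notin S$. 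Every finite fragment of $p_S$ is realized in $M$ itself: for finite $S_0 \subseteq S$ and $T_0 \subseteq \lambda \setminus S$, the set $A_0 = \{c\} \cup \{x_i : i \in S_0 \cup T_0\}$ is closed in $M$, and $A_0 \leq_d B_0$ where $B_0$ adds one new vertex joined exactly to the $x_i$ with $i \in S_0$; the injectivity axioms, which belong to $\Th(G_d)$, yield a closed copy of $B_0$ over $A_0$ inside $M$, and closedness forces the distance-$3$ conditions. So each $p_S$ extends to a complete type over $M$, distinct $S$ give contradictory types, and $|S_1(M)| \geq 2^\lambda$: closure types over $M$ are not based on finite (or even countable) subsets, and $\Th(G_d)$ is not superstable --- indeed not even stable, as one sees more quickly from the fact that $G_d$ embeds every finite graph isometrically, in particular every finite half-graph, so that $E(x,y)$ has the order property.

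You were right to distrust the naive count, and it is worth noting that the paper's own proof of part (2) makes precisely the inference you declined to make: it passes from ``at most $2^{\aleph_0}$ possibilities for each $\cltp(a \bar m)$'' to ``at most $2^{\aleph_0} \cdot |M|$ possibilities for $\cltp(a/M)$''. But a closure type over $M$ is a compatible assignment of closure types to all finite tuples from $M$, of which there can be $2^{|M|}$, and the star example shows that this bound is attained. (Your intermediate claim of a $\lambda^{\aleph_0}$ bound is likewise unjustified; the honest naive bound is $2^\lambda$.) So the correct resolution of the difficulty you isolated is not a finite-basing lemma, which fails, but the recognition that superstability simply does not follow from these hypotheses.
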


\begin{proof}
  For the first statement, for each $X \subseteq A$, choose
  $Y_X \finsubs M$ so that $(X, Y_X)$ is a biminimal pair and $Y_X$ is
  minimal in the $\epo_X$ ordering.  Let
  $A_1 := \bigcup_{X \subseteq A} Y_X$ and note that $A_1$ is finite.
  Thus we can iterate this procedure and define
  $A_{n+1} := \bigcup_{X \subseteq A_n} Y_X$ where $Y_X \finsubs M$ is
  such that $(X, Y_X)$ is an $\epo_X$-minimal biminimal pair.  It is
  clear that letting $B = \bigcup_{n \in \omega} A_n$ we have
  $B \leq M$.

  \def\m{\bar m}
  For the second statement, let $M$ be a model of the theory of the
  generic with $|M| \geq 2^{\aleph_0}$.  Let $p \in S_1(M)$, let $\C$ be
  a $|M|^+$-saturated elementary extension of $M$, and let $a \in \C$
  be a realization of $p$.  Note that for $\m \finsubs M$ there are at
  most $2^{\aleph_0}$ possibilities for $\cltp(a \m)$, hence there are
  at most $2^{\aleph_0} \cdot |M| = |M|$ possibilities for $\cltp(a /
  M)$.  Since the latter determines $\tp(a/M)$, we have $|S_1(M)| \leq
  |M|$. 
\end{proof}

From \cite{moss} we have that the theory of the generic of the class
$(\K_d, \leq_d)$ of distanced graphs is not $\omega$-stable; it is
thus strictly superstable\footnote{It is not small, which is of
  interest since Baldwin has conjectured that no strictly superstable
  generic structure exists which is also small (assuming A1--A6); see
  \cite{baldwin-easter} }.

We have the following lemma about full amalgamation
\begin{lem}\label{infinite-full-amalg}
  Let $(\K, \leq)$ be a full amalgamation class with $\e$ closures.
  Then for any $M$ which is ($\K, \leq)$-injective, if $A \subseteq C$
  and $A \leq B$ (with $B \in \K$) then $C \leq B \oplus_A C$.
\end{lem}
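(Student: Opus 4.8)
The plan is to reduce the infinite claim $C \leq B \oplus_A C$ to the finite full amalgamation that $(\K,\le)$ already enjoys, exploiting the fact that biminimal pairs are finite objects. Assuming as I may that $B \cap C = A$, so that $D := B \oplus_A C$ is defined, I would first observe that $\K$ is cofinal in both $C$ and $D$: cofinality in $C$ holds because $M$ is $(\K,\le)$-injective (hence $\K$ is cofinal in $M$ and, taking $C \subseteq M$, in $C$), and cofinality in $D$ follows since an arbitrary $W \finsubs D$ lies inside $B \oplus_A (A \cup (W \cap C))$, which is in $\K$ by finite full amalgamation and closure under substructure. This makes the relation $C \leq D$ meaningful, and by the definition of $\le$ for infinite structures the goal becomes: for every biminimal pair $(X,Y)$ with $X \finsubs C$ and $Y \finsubs D$, exhibit some $Y' \epo_X Y$ with $Y' \subseteq C$.

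Next I would fix such a biminimal pair $(X,Y)$ and localize it. Setting $C_0 := A \cup X \cup (Y \cap C)$ gives a finite substructure of $C$ with $A \subseteq C_0 \in \K$, and $D_0 := B \oplus_A C_0$ is then a finite substructure of $D$ that contains $Y$ (since $Y \cap B \subseteq B \subseteq D_0$ and $Y \cap C \subseteq C_0$) and $X$. Because $(\K,\le)$ is full and $A \leq B$ with $A \subseteq C_0$, the definition of full amalgamation yields the finite closed extension $C_0 \leq D_0$.

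The final step is to transfer the obstruction $(X,Y)$ across $C_0 \leq D_0$. Since $(\K,\le)$ has $\e$-resolutions, the characterization of $\le$ in Definition \ref{ex-cl-defn} applies to $C_0 \leq D_0$: as $(X,Y)$ is a biminimal pair with $X \subseteq C_0$ and $Y \subseteq D_0$, there is a biminimal pair $(X,Y')$ with $Y' \subseteq C_0$ and $Y' \epo_X Y$. Since $Y' \subseteq C_0 \subseteq C$, this $Y'$ is exactly the witness demanded by $C \leq D$, completing the proof.

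I expect the only genuine friction to be the bookkeeping in this localization: confirming that $D_0$ really is the free amalgam $B \oplus_A C_0$ realized as a substructure of $D$ with $Y \subseteq D_0$, and that $\K$ is cofinal in $D$ so that $C \leq D$ is well posed in the first place. Once the question is reduced to the finite pair $C_0 \leq D_0$, both finite full amalgamation and the defining property of $\e$-resolutions fire directly, so no fresh free-amalgam path analysis (of the kind appearing in the finite full-amalgamation lemma) is required here.
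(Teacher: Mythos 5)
Your proof is correct and takes essentially the same route as the paper's: fix a biminimal pair $(X,Y)$ with $X \finsubs C$ and $Y \finsubs B \oplus_A C$, localize to a finite $C_0 \subseteq C$ with $A \cup X \subseteq C_0$ and $Y \subseteq B \oplus_A C_0$, apply finite full amalgamation to get $C_0 \leq B \oplus_A C_0$, and then use the defining property of $\e$-resolutions to obtain $Y' \epo_X Y$ with $Y' \subseteq C_0 \subseteq C$. Your extra bookkeeping (cofinality of $\K$ in $B \oplus_A C$, and the explicit inclusion of $A$ and $X$ in $C_0$) merely spells out what the paper's shorter proof leaves implicit.
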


\begin{proof}
  Let $(X, Y)$ be a biminimal pair with $X \subseteq C, Y \subseteq B
  \oplus_A C$.  Choose a finite $C_0$ so that $Y \subseteq B \oplus_A
  C_0$.  Then $C_0 \leq C_0 \oplus_A B$ by full amalgamation, so we
  must have $Y' \epo_X Y$ with $Y' \subseteq C_0 \subseteq C$.
\end{proof}

We also show that under fairly modest assumptions, the class of $(\K, \leq)$-injective
structures is elementary.

\begin{defn}
  A class $(\K, \leq)$ with $\e$-resolutions has finitary $\e$-resolutions if for
  every $X \in \K$, the partial order induced by $\epo_X$ is a
  well-ordering and every equivalence class is finite.
\end{defn}

\begin{lem}
  If $(\K, \leq)$ has finitary $\e$-resolutions, then for any
  $B \in \K$ there is a universal formula $k_B(\x)$ such that for $M$
  in which $\K$ is cofinal, $M \models k_B(\b)$ exactly when $\b$ is
  isomorphic to $B$ (under some fixed ordering of the ordering of the
  elements of $B$) and is closed in $M$.
\end{lem}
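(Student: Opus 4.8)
The plan is to build $k_B$ by conjoining the quantifier-free diagram of $B$ with finitely many clauses, each forbidding a ``short'' biminimal extension whose presence in $M$ would witness that the copy of $B$ is not closed; the whole conjunction will be equivalent to a universal formula precisely because negated existentials are universal and there are only finitely many clauses. First I would fix an enumeration of the universe of $B$ and let $\Delta_B(\x)$ be its atomic diagram, the quantifier-free formula recording exactly which relations hold and fail among $\x$, so that for any $M$ in which $\K$ is cofinal we have $M \models \Delta_B(\b)$ iff $\b$ is a copy of $B$ under the fixed ordering.

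Next I would reduce ``$\b \leq M$'' to a statement about short biminimal extensions. By definition $\b \leq M$ means that for every biminimal pair $(X,Y)$ with $X \subseteq \b$ and $Y \finsubs M$ there is some $Y' \epo_X Y$ with $Y' \subseteq \b$. For each substructure $X \subseteq B$, since $\epo_X$ is a well-ordering there is, when $B$ realizes any biminimal extension of $X$, a well-defined $\epo_X$-least class $c_X$ realized inside $B$ over $X$; put $c_X = \infty$ when $B$ realizes none. The equivalence I would establish is: $\b \leq M$ iff for every $X \subseteq \b$, $M$ realizes no biminimal extension $Y$ of $X$ with $Y \prec_X c_X$ (strictly $\epo_X$-below $c_X$, with $Y \prec_X \infty$ meaning \emph{any} $Y$ when $c_X = \infty$). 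The forward direction is immediate, since such a $Y$ could have no witness inside $\b$; the reverse uses that $\epo_X$ is a total order on classes, so $Y \not\prec_X c_X$ gives $c_X \epo_X Y$, whence any representative of $c_X$ inside $\b$ serves as the required $Y'$.

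The crucial step, and the place where \emph{finitary} $\e$-resolutions does the work, is that for each $X \subseteq B$ the set $E_X$ of biminimal extensions of $X$ lying strictly $\epo_X$-below $c_X$ (or the set of all biminimal extensions of $X$, when $c_X = \infty$) is finite. Because $\epo_X$ is a well-ordering with finite equivalence classes, these extensions occupy only finitely many $\epo_X$-classes, each finite, so $E_X$ is finite and there is a uniform bound $n_B$ on $|Y \setminus X|$ over all $X \subseteq B$ and $Y \in E_X$. This finiteness is exactly what keeps the conjunction below finite and makes $k_B$ first order; securing it is the main obstacle, and it is precisely where the hypothesis cannot be weakened to plain $\e$-resolutions, since the degenerate case $c_X = \infty$ forces us to forbid \emph{every} biminimal extension of $X$, and this can be done by a single universal clause only when there are finitely many to forbid.

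Finally I would assemble the formula. For $X \subseteq B$ and $Y \in E_X$, let $\delta_{X,Y}(\x, \bar w)$ be the quantifier-free formula asserting that $\bar w$ together with the copy of $X$ inside $\x$ forms a copy of $Y$ over $X$, and set
\[
k_B(\x)\ :=\ \Delta_B(\x)\ \wedge\ \bigwedge_{X \subseteq B}\ \bigwedge_{Y \in E_X}\ \neg \exists \bar w\, \delta_{X,Y}(\x, \bar w).
\]
Since $\Delta_B$ is quantifier-free, each conjunct $\neg \exists \bar w\, \delta_{X,Y}$ is $\forall \bar w\, \neg \delta_{X,Y}$, and the outer conjunction is finite, $k_B$ is logically equivalent to a single universal formula (rename the finitely many blocks of bound variables apart and pull the universal quantifiers to the front). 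Correctness then reads off the equivalence above: $M \models \Delta_B(\b)$ forces $\b \cong B$, while the remaining conjuncts hold exactly when $M$ realizes no biminimal extension of any $X \subseteq \b$ strictly $\epo_X$-below $c_X$, which by that equivalence is exactly $\b \leq M$. I would conclude by spelling out both implications, handling the $c_X = \infty$ case explicitly so that the clause forbidding all biminimal extensions of such an $X$ matches the requirement that a closed copy of $B$ admit no unwitnessed biminimal extension.
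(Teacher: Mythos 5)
Your construction follows the same route as the paper's own proof: conjoin the quantifier-free diagram of $B$ with finitely many clauses $\forall \bar w\, \neg \delta_{X,Y}(\x,\bar w)$ forbidding, for each $X \subseteq B$, those biminimal extensions of $X$ that are $\epo_X$-smaller than the least one realized inside $B$, and observe that this conjunction is (equivalent to) a single universal formula. Your reduction of ``$\b \leq M$'' to the non-realization of such too-small extensions is exactly the paper's idea, spelled out in more detail.

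The gap is in the step you yourself flag as crucial. When $c_X = \infty$ (no biminimal extension of $X$ lies inside $B$), you claim that $E_X$ --- now the set of \emph{all} biminimal extensions of $X$ --- is finite ``because $\epo_X$ is a well-ordering with finite equivalence classes.'' That is a non sequitur: a well-ordering with finite classes can have order type $\omega$ and hence infinitely many classes. Concretely, in the class of distanced graphs (which the paper asserts has finitary $\e$-resolutions), take $B$ to be two non-adjacent isolated vertices and $X = B$: the biminimal extensions of $X$ are the paths joining the two points, one $\epo_X$-class for each length $n \geq 2$, so $E_X$ is infinite and your conjunction is no longer a formula. Worse, this case is not repairable within the stated hypotheses: closedness of such a pair $\b$ in $M$ says ``no path in $M$ joins the two points,'' and a routine ultraproduct of paths of increasing length (in each factor the endpoints are not closed, while in the ultraproduct they are) shows this is not equivalent to any single first-order formula. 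A similar problem occurs whenever $c_X$ sits at a limit position of the well-order, since the initial segment below it is then infinite. To be fair, the paper's own one-sentence proof glosses over exactly the same case --- it forbids extensions below ``any $\epo_{\x_0}$-minimal biminimal extension which occurs in $\x$'' and asserts finiteness ``by finitariness'' --- so your proposal reproduces the paper's argument, gap included; but as a self-contained proof, the finiteness claim for $E_X$ in the degenerate case is false as stated, and the lemma would need either a stronger finitariness hypothesis (e.g., only finitely many biminimal extensions over each $X$) or an argument excluding the degenerate case.
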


\begin{proof}
  Let $k_B$ assert that $\x$ has the quantifier free type of $B$ and
  also, for every $\x_0 \finsubs \x$, that there does not exist $\y$
  with $(\x_0, \y)$ a biminimal pair and $\y$ smaller under
  $\epo_{\x_0}$ then any $\epo_{\x_0}$-minimal biminimal extension
  which occurs in $\x$.  The set off all such extensions is finite by
  finitariness of the $\e$-resolutions.
\end{proof}

We note that the class of distanced graphs has finitary
$\e$-resolutions as does the class of Example \ref{hamilton-ex}.  The
class from Example 4.14, however, does not.

\begin{lem}
  Let $(\K, \leq)$ have finitary $\e$-resolutions.  Let $\Sigma_I$ be the
  following sentences:
  \begin{itemize}
  \item $\forall \x \neg
    \Delta_B(\x)$ for finite $B \not \in \K$ and $\Delta_B(\x)$ the quantifier-free type of $B$.
  \item $\forall \x k_A(\x) \rightarrow \exists \y k_B(\x \y)$
    for $A \leq B$ and $k_A, k_B$ as guaranteed by the previous lemma.
  \end{itemize}
  The $\Sigma_I$ axiomatizes the class of $(\K, \leq)$-injective
  structures.  
\end{lem}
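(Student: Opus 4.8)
The plan is to establish, for an arbitrary $L$-structure $M$, that $M \models \Sigma_I$ if and only if $M$ satisfies the two defining conditions of a $(\K,\leq)$-injective structure: that $\K$ is cofinal in $M$ (Age) and that every closed copy of $A$ extends, for each $A \leq B$ in $\K$, to a closed copy of $B$ (Injectivity). The argument is essentially a dictionary, supplied by the preceding lemma, between these two semantic conditions and the two syntactic schemes comprising $\Sigma_I$. Before beginning I would fix once and for all, for every $A \leq B$ in $\K$, an enumeration of $B$ whose initial segment enumerates $A$, so that the free variables of $k_A(\x)$ are literally the first $|A|$ of the free variables of $k_B(\x \y)$ and the witnessing isomorphisms restrict and extend correctly.

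First I would treat the Age condition against the first scheme. If $M$ is $(\K,\leq)$-injective then $\K$ is cofinal in $M$, so every finite substructure of $M$ lies in $\K$; since $\K$ is closed under isomorphism, no tuple of $M$ can realize $\Delta_B$ for $B \notin \K$, and hence $M \models \forall \x\, \neg \Delta_B(\x)$ for every such $B$. Conversely, if $M \models \Sigma_I$ then the first scheme forces the quantifier-free type of any finite substructure of $M$ to be $\Delta_B$ for some $B \in \K$ (otherwise the corresponding sentence fails), so $\K$ is cofinal in $M$. This cofinality is exactly the hypothesis needed to invoke the preceding lemma and to treat $k_A, k_B$ as characterizing ``isomorphic to $A$ (resp. $B$) and closed in $M$.''

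Next I would verify the equivalence for Injectivity against the second scheme. For the forward direction, suppose $M$ is injective and $M \models k_A(\b)$; by the preceding lemma $\b$ is a closed copy of $A$, so the inclusion $\b \sinto M$ is a closed embedding, and since $A \leq B$ injectivity extends it to a closed copy of $B$. Enumerating the new vertices as $\c$ in the fixed order, $\b\c$ is a closed copy of $B$, so $M \models k_B(\b\c)$ and therefore $M \models \exists \y\, k_B(\b \y)$. For the converse, suppose $M \models \Sigma_I$ and let $f : A \sinto M$ with $A \leq B$; writing $\b = f(A)$ in the fixed order of $A$, the preceding lemma gives $M \models k_A(\b)$, so the second scheme yields $\c$ with $M \models k_B(\b\c)$. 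Again by the preceding lemma $\b\c$ is a closed copy of $B$, and because the enumeration of $B$ extends that of $A$ the witnessing isomorphism $B \cong \b\c$ restricts to $f$, giving the required closed extension $\hat f : B \sinto M$.

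Combining the two equivalences shows that $M \models \Sigma_I$ precisely when $M$ enjoys both the Age and the Injectivity properties, i.e. precisely when $M$ is $(\K,\leq)$-injective, which is what axiomatization asserts. I do not anticipate a serious obstacle, since the real content is carried entirely by the preceding lemma (which in turn is where finitariness of the $\e$-resolutions is used to make $k_B$ first-order); the only delicate point is the bookkeeping of enumerations so that $k_A$ sits inside $k_B$ and the witnessing isomorphisms line up under restriction and extension. The one spot I would double-check is the converse half of the Age step, where one must confirm that the first scheme, ranging over all finite $B \notin \K$, genuinely forbids every finite substructure of $M$ from lying outside $\K$ — which it does precisely because $\K$ is closed under substructure and isomorphism.
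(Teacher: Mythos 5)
Your proof is correct and matches the paper's intent: the paper's own proof of this lemma is simply ``Clear,'' and your argument is exactly the routine unwinding being alluded to --- the first scheme is equivalent to the Age condition, and (once cofinality lets you invoke the preceding lemma to read $k_A$, $k_B$ as ``closed copy of $A$, resp.\ $B$'') the second scheme is equivalent to Injectivity. Your attention to compatible enumerations of $A$ and $B$, so that the witnessing isomorphism for $k_B(\bar b \bar c)$ restricts to the given embedding $f$, is precisely the bookkeeping the paper suppresses.
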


\begin{proof}
  Clear.
\end{proof}

\subsection{Transfer}
We note that for a given class with $\forall$-closures, any
corresponding $\exists$-companion will have a generic that is no more
complex then that of the original class.  This allows us to prove
various transfer theorems.

Throughout, fix a full amalgamation class $(\K, \leq_\forall)$
with $\forall$-closures and let $(\K, \leq_\exists)$ be an
$\exists$-companion.  Let $G_\forall, G_\exists$ denote the
corresponding generics.

Note that for $A, B \in \K$, if $A \leq_\a B$ then {\it a fortiori} $A
\leq_\e B$.  This has the following consequences
\begin{lem}
  Let $G_\forall, G_\exists$ respectively denote the $(\K, \leq_\a),
  (\K, \leq_\e)$ generics.  Then 
  \begin{enumerate}
  \item   $G_\a$ embeds as a $\leq_\e$-strong substructure of $G_\e$.
  \item If $M_\e$ is any $(\K, \leq_\e)$ injective structure, then
    $G_\e$ embeds as a $\leq_\e$-strong substructure of $M_\e$
  \item If $M_\e$ is any $(\K, \leq_\e)$ injective structure, then
    $G_\a$ embeds as a $\leq_\e$-strong substructure of $M_\e$
  \end{enumerate}
\end{lem}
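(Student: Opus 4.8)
The plan is to prove all three parts from a single template: write the source generic as an increasing union of finite closed pieces, embed those pieces one at a time into the target by repeated use of injectivity, and then observe that the resulting union is automatically $\leq_\e$-strong because the condition defining $\leq_\e$ between infinite structures refers only to a \emph{finite} biminimal base $X$, which must already lie inside one of the finite closed pieces. Throughout I would lean on the fact noted just before the lemma, that $A \leq_\a B$ implies $A \leq_\e B$ for finite $A, B \in \K$: any biminimal pair witnessing a failure of $A \leq_\e B$ is in particular a minimal pair, so by $A \leq_\a B$ it lies in $A$, and reflexivity of $\epo_X$ then supplies the required witness.

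For part (1) I would enumerate $G_\a = \{g_0, g_1, \dots\}$ and set $A_n := \mcl_{G_\a}(\{g_0, \dots, g_{n-1}\})$. Because $(\K, \leq_\a)$ has $\forall$-closures and $G_\a$ has finite closures, each $A_n$ is a finite $\leq_\a$-closed substructure, the $A_n$ increase with $\bigcup_n A_n = G_\a$, and $A_n \leq_\a A_{n+1}$ holds immediately from $A_n \leq_\a G_\a$ and the definition of closedness in an infinite structure. By the a fortiori fact, $A_n \leq_\e A_{n+1}$. I would then build $f : G_\a \to G_\e$ inductively: the empty embedding is $\leq_\e$-strong since $\emptyset \leq_\e G_\e$, and using $\emptyset \leq_\e A_0$ followed by each $A_n \leq_\e A_{n+1}$, injectivity of $G_\e$ yields compatible embeddings $f_n : A_n \to G_\e$ with $f_n(A_n) \leq_\e G_\e$. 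Setting $f = \bigcup_n f_n$, to check $f(G_\a) \leq_\e G_\e$ I would take any biminimal pair $(X, Y)$ with $X \finsubs f(G_\a)$ and $Y \finsubs G_\e$, note $X \subseteq f(A_n)$ for some $n$ by finiteness, and use $f(A_n) \leq_\e G_\e$ to get $Y' \epo_X Y$ with $Y' \subseteq f(A_n) \subseteq f(G_\a)$.

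Part (2) runs through the same template with $G_\e$ as source and $M_\e$ as target. Since $G_\e$ is the $(\K, \leq_\e)$-generic it has finite closures, so I can write $G_\e = \bigcup_n B_n$ for an increasing chain $B_0 \leq_\e B_1 \leq_\e \cdots$ of finite $\leq_\e$-closed substructures. Injectivity of $M_\e$—the only property of $M_\e$ used besides Age—lets me extend strong embeddings stage by stage to an embedding $g : G_\e \to M_\e$ with each $g(B_n) \leq_\e M_\e$, and the same finite-base argument gives $g(G_\e) \leq_\e M_\e$. No finite-closure hypothesis on $M_\e$ is needed, since every closure used is taken in the source.

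Part (3) I would obtain by composing the maps from (1) and (2); the content is that $\leq_\e$ between infinite structures is transitive and invariant under the isomorphism $g$. With $h = g \circ f$ and a biminimal pair $(X, Y)$ such that $X \finsubs h(G_\a)$ and $Y \finsubs M_\e$, I would first apply $g(G_\e) \leq_\e M_\e$ to obtain $Y_1 \epo_X Y$ with $Y_1 \subseteq g(G_\e)$, then apply $g(f(G_\a)) \leq_\e g(G_\e)$—the image under the isomorphism $g$ of $f(G_\a) \leq_\e G_\e$—to obtain $Y_2 \epo_X Y_1$ with $Y_2 \subseteq h(G_\a)$, and finally transitivity of $\epo_X$ to conclude $Y_2 \epo_X Y$. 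I expect the genuine obstacle throughout to be exactly this passage from finite strong stages to an infinite strong limit: it succeeds only because the infinite definition of $\leq_\e$ is local in the finite base $X$, and it requires care that $\epo_X$, which is merely a partial order on $\epo_X$-classes of biminimal extensions, still delivers the reflexivity, isomorphism-invariance, and transitivity invoked at each step.
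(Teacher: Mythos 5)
Your proof is correct and follows essentially the same route as the paper's: write the source generic as an increasing union of finite closed substructures, embed stage by stage into the target using its injectivity (together with the a fortiori fact that $\leq_\a$ implies $\leq_\e$), verify that the union of the images is $\leq_\e$-strong by locality of the finite biminimal base $X$, and obtain part (3) by composing the first two embeddings. Your version merely spells out details the paper leaves implicit (the explicit $\mcl$-chain, the justification of the a fortiori fact, and the transitivity/isomorphism-invariance of $\epo_X$ needed for the composition), but the underlying argument is identical.
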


\begin{proof}
  For the first item, write $G_\a = \bigcup_{i < \omega} A_i$ where
  $A_i \leq_\a A_{i+1}$.  Then by induction $A_i$ embeds as a $\leq_\e$-strong
  substructure of $G_\e$.  Thus  $G_\a$ embeds into $G_\e$, say with
  image $G_\a'$; if $(X,Y)$
  is a biminimal pair in $G_\e$ with $X \finsubs G_\a'$, then for
  some $i$ $X \subseteq A_i'$ (the image of $A_i$).  Since $A_i'
  \leq_\e G_\e$, we must have some $Y' \epo_X Y$ with $Y' \subseteq Y$.

  The same argument modified to use a decomposition $G_\e = \bigcup_{i
    < \omega} B_i$ ($B_i \leq_\e B_{i+1}$) establishes the second item,
  while the third is an immediate consequence of the first two.  
\end{proof}

\begin{thm} 
  Assume that $(\K, \leq_\a)$ and $(\K, \leq_\e)$ both have full
  amalgamation.  Let $\Th(G_\a)$ denote the theory of the
  $(\K, \leq_\a)$-generic and similarly for $\Th(G_\e)$.  Let
  $M_\e \models \Th(G_\e)$.  Then there is some
  $M_\a \models \Th(G_\a)$ with $|M_\a| = |M_\e|$ so that
  $|S_1(M_\e)| \leq |S_1(M_\a)|$, where $S_1(X)$ denotes the set of
  complete 1-types over parameter set $X$.  In particular, if
  $\Th(G_\forall)$ is stable ($\omega$-stable, superstable, etc), then
  so is $\Th(G_\exists)$
\end{thm}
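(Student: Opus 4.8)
The plan is to bound the number of $1$-types over $M_\e$ by the number of $1$-types over a suitable $M_\a \models \Th(G_\a)$ of the same cardinality, using the fact that $\Th(G_\a)$-structures carry \emph{at least as much} closure data as $\Th(G_\e)$-structures. The guiding intuition is that since $A \leq_\a B$ implies $A \leq_\e B$, the $\a$-closure $\mcl$ dominates any $\e$-resolution, and by Proposition \ref{closed-elem} (together with the surrounding closure-type machinery) types over a model are controlled by closure types. So the whole argument reduces to producing $M_\a$ and an injection of closure-type data.

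First I would pass to a sufficiently saturated monster $\C_\e \models \Th(G_\e)$ containing $M_\e$ and a saturated $\C_\a \models \Th(G_\a)$; the preceding transfer lemma lets me embed $G_\a$ (and hence, by stretching, models of $\Th(G_\a)$) as a $\leq_\e$-strong substructure of any $(\K, \leq_\e)$-injective structure, which is what links the two sides. The key step is to build $M_\a \models \Th(G_\a)$ with $|M_\a| = |M_\e|$ so that the closure-type assignment $a \mapsto \cltp(a/M_\e)$ factors through the corresponding assignment over $M_\a$. Concretely, I would exhibit a map on parameters carrying the $\e$-closure configuration of $M_\e$ into the $\a$-closure configuration of $M_\a$ in a way that is injective on closure types; the point is that every biminimal pair realized over a finite tuple of $M_\e$ corresponds (same biminimal pairs, by the companion definition) to one over $M_\a$, and the $\forall$-companion simply records \emph{all} of them while the $\e$-companion records only $\epo$-minimal representatives. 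Hence distinct $\e$-closure types over $M_\e$ cannot collapse to a single $\a$-closure type over the image, giving $|S_1(M_\e)| \leq |S_1(M_\a)|$ once we invoke Proposition \ref{closed-elem} on both sides to replace $S_1$ by the set of realized closure types.

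The main obstacle I expect is the bookkeeping needed to make the closure-type map genuinely injective: because an $\e$-resolution chooses only one representative per $\epo_X$-class while $\mcl$ retains the entire class, two different $\e$-types over $M_\e$ could a priori map to $\a$-configurations that are indistinguishable unless one tracks which biminimal extensions are \emph{present} versus merely \emph{possible}. I would handle this by using the finitary structure of biminimal pairs in the relevant classes and Lemma \ref{cltp-fin} (finiteness of the data witnessing a fragment of a closure type over a closed base), so that each $\e$-closure type is pinned down by a finite amount of biminimal-pair data that survives the transfer to the $\forall$-side. Once the injectivity of the closure-type map is secured, the cardinality inequality is immediate, and the final clause — stability, $\omega$-stability, superstability transferring from $\Th(G_\a)$ to $\Th(G_\e)$ — follows by definition, since each of these is the assertion that $|S_1(M)| \leq |M|$ (respectively $\leq |M|^{|T|}$, etc.) for all models $M$ of the appropriate cardinality.
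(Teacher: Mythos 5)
Your overall strategy does match the paper's: replace types over $M_\e$ by closure types (Proposition \ref{closed-elem}), transfer them along a parameter map into a model of $\Th(G_\a)$, complete by compactness, and count. But there is a genuine gap at the step you wave through, namely the construction of $M_\a$ together with the map along which closure types are transferred. You appeal to the preceding transfer lemma, but that lemma goes the wrong way: it embeds $G_\a$ as a $\leq_\e$-strong substructure of $(\K, \leq_\e)$-injective structures, and says nothing about embedding $M_\e$ --- an arbitrary model of $\Th(G_\e)$, which need not even be $\e$-injective --- into any model of $\Th(G_\a)$. ``Stretching'' $G_\a$ to larger models of its theory does not produce the required injection $h \colon M_\e \to M_\a$ on parameters, and without such an $h$ the phrase ``injective on closure types over the image'' has no content. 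The paper obtains $h$ by compactness: introduce constants $C \cup D$, let $S$ assert that $C$ enumerates an isomorphic copy of $M_\e$ and that, for each $A \in \K$, suitable elements of $D$ form a $\leq_\a$-closed copy of $A$; finite satisfiability of $\Th(G_\a) \cup S$ holds in $G_\a$ itself because the two companions have the same age, and L\"owenheim--Skolem gives $|M_\a| = |M_\e|$.

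The second clause of $S$ is not decoration, and it is the second thing your sketch omits. To know that the transferred set $P(x) = h(\cltp(a_p/M_\e))$ is consistent with the theory of $M_\a$ --- i.e.\ extends to an element of $S_1(M_\a)$ at all --- one must realize each finite fragment, and by Lemma \ref{cltp-fin} this requires a strong embedding of an associated finite structure into $M_\a$ over the relevant parameters. An arbitrary model of $\Th(G_\a)$ containing a copy of $M_\e$ has no reason to admit closed embeddings of every element of $\K$; that is exactly what the constants $D$ guarantee. You instead spend your effort on injectivity of the closure-type map (worrying that distinct $\e$-closure types might ``collapse''), but that is the easy step: by Proposition \ref{closed-elem} distinct types over $M_\e$ already differ on some intrinsic formula with finitely many parameters, and since $h$ is injective the transferred formulas $\exists \x\, \phi(\x; h(\bar m))$ and $\neg \exists \x\, \phi(\x; h(\bar m))$ are syntactically contradictory, so no two completions can coincide; your discussion of ``all biminimal pairs versus $\epo$-minimal representatives'' is not what drives this. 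Consistency of the transferred type, not injectivity, is where the work lies, and it is the part your proposal does not secure.
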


\begin{proof}
  \def\m{\bar m} Let $C \cup D$ be a set of $|M_\e| + \aleph_0$ new constants, and let
  $S$ be a set of sentences asserting that $C$ is isomorphic to $M_\e$
  along with sentences asserting that for $A \in \K$, some elements of
  $D$ form an $\a$-closed copy of $A$ in any model of $S$.  Since
  $(\K, \leq_\a)$ has the same age as $(\K, \leq_\e)$, by compactness
  there is some $M_\a \models \Th(G_\a) \cup S$.  Let
  $h: M_\e \to M_\a$ be an embedding.  We define an injection
  $f: S_1(M_\exists) \to S_1(M_\forall)$. Fix $p \in S_1(M_\exists)$
  and let $a_p$ be a realization in some $|M_\e|$-saturated extension
  $N$ of $M_\e$.  Let $C_p = \cltp(a_p / M_\exists)$, and for
  $\phi(x; \m_\e) \in C_p$ let $h(\phi) = \phi(x; h(\m_\e))$ and let
  $P(x) = \bigcup_{\m_\e} \{\, h(\phi) \,\}$
  By Lemma \ref{cltp-fin}, each finite fragment $\Sigma$ of $P(x)$ is
  associated with a structure $D$ such that any $\leq_\e$-strong
  embedding of $D$ into $M_\a$ will contain a realization of $\Sigma$.
  Since the age of $M_\a$ is the same as that of $M_\e$ and since
  $M_\a$ embeds every element of $\K$ as closed substructure, there
  will be such an embedding.  Thus by compactness, $P(x)$ extends to a
  completion in $S_1(M_\a)$.  Letting $f(p)$ be any such completion,
  we note that since different types in $S_1(M_\e)$ differ in their
  closure-types, and since this difference must be witnessed by a
  finite subset of $M_\e$, we have that $f$ is an injection as
  required.
\end{proof}



\section{Moss Structures}
In this section we discuss the existence of Moss structures for
amalgamation classes.  We will see that the question of the
existence of such structures is quite straightforward in classes with
$\a$-closures but more delicate for classes with $\e$-resolutions.

\begin{defn}\label{defn-moss-struct}
  Let $(\K, \leq)$ be any amalgamation class.  A $(\K, \leq)$ {\it
    Moss structure} is a structure $M$ which is $(\K, \leq)$-injective but
  which does not have finite closures.  That is, there is some $A
  \finsubs M$ such that there is no $B$ with $A \subseteq B
  \finsubs M$ and $B \leq M$
\end{defn}

We first note that the existence of Moss structures is easily
established in classes $(\K, \leq)$ with $\a$-closures, unless every
structure in which $\K$ is cofinal has finite closures (in which case,
following \cite{bal-shi}, we say that $(\K, \leq)$ has finite closures).

\begin{prop}\label{yes-moss}
  If $(\K, \leq)$ is a full amalgamation class with $\forall$-closures
  and without finite closures, then there is a Moss structure which
  satisfies the theory of the $(\K, \leq)$-generic.
\end{prop}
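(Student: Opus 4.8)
The plan is to produce the Moss structure as an $\omega$-saturated model of $\Th(G)$, where $G$ denotes the $(\K,\leq)$-generic; the whole content is to see that such a model must \emph{omit} the finite-closure property while \emph{retaining} injectivity, since injectivity alone is preserved and finite closures are not first-order once closures may be infinite.

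First I would exploit the failure of finite closures to write down a type that $G$ omits. By hypothesis there is a structure $N$ with $\K$ cofinal in it and a finite $A \subseteq N$ with $\mcl_N(A)$ infinite; since $(\K,\leq)$ has $\forall$-closures, $\mcl_N(A)$ is the unique smallest closed superset of $A$, so $A$ has no finite strong superset in $N$. Unwinding the definition of $\mcl$, this yields, for every $n$, a finite configuration in $\K$ (a finite piece of $\mcl_N(A)$ containing $A$) in which a base point sits below at least $n$ further elements that are forced into the maximal closure by iterated minimal pairs. I would encode these configurations by a nested sequence of existential intrinsic formulae $\theta_n(x)$, arranged so that $\theta_n \proves \theta_m$ for $m < n$ and $\theta_n$ asserts that $x$ has at least $n$ elements in its maximal closure, and set $p(x) = \{\, \theta_n(x) : n \in \omega \,\}$. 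Because $G$ embeds every member of $\K$, each $\theta_n$ is realized in $G$, so every finite fragment of $p$ is consistent with $\Th(G)$; but any realization of $p$ would have infinite maximal closure, which $G$ forbids by finite closures. Hence $p$ is omitted by $G$.

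Next, let $M$ be any $\omega$-saturated model of $\Th(G)$. Then $M$ realizes $p$, so some $c \in M$ has $\mcl_M(c)$ infinite; by $\forall$-closures this is the unique closure of $c$, and every closed superset of $c$ contains it, so $c$ admits no finite strong superset and $M$ fails finite closures. The Age condition is automatic, since the sentences $\forall \x\, \neg \Delta_B(\x)$ for finite $B \notin \K$ hold in $G$ and therefore lie in $\Th(G)$, forcing every finite substructure of $M$ into $\K$.

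The main obstacle is injectivity, precisely because ``$f(A) \leq M$'' is not a first-order condition when closures can be infinite, so the extension property cannot simply be read off an axiomatization. I would instead derive it from $\omega$-saturation together with the intrinsic-formula description of closures available under full amalgamation. Given $A \leq B$ in $\K$ and a strong embedding $f: A \sinto M$, I want a strong extension $\hat f: B \sinto M$. The quantifier-free and intrinsic demands on the image of $B$ over $f(A)$ are captured by $\cltp$, and Lemma \ref{cltp-fin} shows that each finite fragment of the extension requirement is already witnessed inside any copy of an associated finite $D \subseteq \cl(AB)$ placed freely over $f(A)$; full amalgamation and the injectivity of $G$ make these fragments consistent with $\Th(G)$ over the parameters $f(A)$, so $\omega$-saturation realizes the full extension type, and Proposition \ref{closed-elem} promotes the resulting closure-type agreement to a genuine strong copy of $B$. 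Running this realization for every extension task shows $M$ is $(\K,\leq)$-injective, and together with the preceding paragraph $M$ is a Moss structure modelling $\Th(G)$. I expect the delicate point to be verifying that the realized extension is \emph{strong} rather than merely isomorphic to $B$; this is exactly where Lemma \ref{no-bmp-suff} and the finite approximability of $\cl$ must be invoked, checking that no biminimal pair over the new copy of $A$ escapes into $M$.
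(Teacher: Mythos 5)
Your proposal is correct, and it in fact does more than the paper's own proof does. The paper's argument for Proposition \ref{yes-moss} consists of the compactness step alone: failure of finite closures yields an infinite chain of minimal pairs $(X_i, X_{i+1})$, the type asserting the existence of a copy of $\bigcup_{i<\omega} X_i$ is finitely satisfiable in the generic $G$, and some elementary extension of $G$ realizes it; under $\forall$-closures any realization forces an infinite maximal closure, so finite closures fail there. Your formulas $\theta_n$, extracted from finite pieces of an infinite $\mcl_N(A)$ together with their witnessing chains of minimal pairs, are exactly this type in different packaging, so on this half the two arguments coincide. Where you genuinely diverge is on the point the paper passes over in silence: its proof never verifies that the elementary extension satisfies the Injectivity clause of Definition \ref{defn-moss-struct}, and this is not automatic, since closedness of a finite set in a model is an infinite conjunction of universal conditions and injectivity need not pass to arbitrary elementary extensions of $G$. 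You correctly single this out as the main obstacle and supply it by taking $M$ to be $\omega$-saturated: under $\forall$-closures, closedness of a finite tuple is a conjunction of universal formulas in those parameters, hence transfers into a common monster; Proposition \ref{closed-elem} then identifies the type of your closed $f(A) \subseteq M$ with that of a closed copy of $A$ in $G$; the type of a closed copy of $B$ over the latter (which genericity provides) pulls back to a type over $f(A)$ that is finitely satisfiable in $M$; saturation realizes it, and the universal closedness formulas it contains, together with Lemma \ref{no-bmp-suff}, make the realization strong. This is somewhat cleaner than your routing through Lemma \ref{cltp-fin}, which is really the transfer tool and requires producing suitably freely-placed copies of the approximating structures $D$ inside $M$ --- a point your sketch leaves vague --- but the ingredients you name do suffice. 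As for what each approach buys: the paper's proof is a three-line compactness argument, but read literally it only produces a model of $\Th(G)$ without finite closures; your version costs the saturation argument but delivers the full Moss structure, and along the way establishes the sharper fact, implicitly needed for the first part of Theorem \ref{thm:main}, that every $\omega$-saturated model of $\Th(G)$ is $(\K,\leq)$-injective, so that non-saturation of the generic alone yields the conclusion.
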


\begin{proof}
  By our supposition, there is an infinite chain of minimal pairs
  $(X_i, X_{i+1})$ with $X_i \neq X_{i+1}$.  Let $p$ be the type which
  asserts that a copy of $\bigcup_{i < \omega} X_i$ exists in any
  realization of $p$.  Then $p$ is finitely satisfied in the generic,
  so that some elementary extension of the generic realizes $p$.
\end{proof}

We will not give a general account of the existence of such structures
in classes with $\e$-resolutions.  We do note that under full
amalgamation such structures are obstructed by a countably infinite
version of injectivity.

\begin{thm}\label{no-moss}
  Suppose $(\K, \leq)$ has $\e$-resolutions.  Let
  us say that $M$ is $(\K, \leq)$ {\it countably injective} when for
  any countable structures $C, D$, if $C \leq D$ and $f: C \into M$ is a
  strong embedding, then $f$ extends to a strong embedding
  ${\hat f}: D \into M$.  Then if
  \begin{enumerate}
  \item $M$ is countably injective; OR
  \item\label{finitary} $M$ is a model of the theory of the $(\K, \leq)$-generic, and
    $(\K, \leq)$ has finitary $\e$-resolutions; OR
  \item $M$ is an $\omega$-saturated model of the $(\K, \leq)$-generic
  \end{enumerate}
  then   $M$ is not a Moss structure.
\end{thm}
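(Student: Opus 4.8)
The plan is to observe that each of the three hypotheses already guarantees that $M$ is $(\K,\leq)$-injective, so that in every case it suffices to establish the remaining property, namely that $M$ has \emph{finite closures}: every $A \finsubs M$ admits a finite $\bar A$ with $A \subseteq \bar A \leq M$. Injectivity is immediate in each case: countable injectivity specializes to the finite case; if $M \models \Th(G)$, where $G$ is the $(\K,\leq)$-generic, then under finitary $\e$-resolutions injectivity is axiomatized by the first-order scheme $\Sigma_I$ of the preceding lemma, which $G$ satisfies, so $M$ is injective; and the $\omega$-saturated case is a special instance. Recall also that, by the theorem on countable resolutions, every finite $A$ already has a \emph{countable} resolution $B \le M$, built by repeatedly adjoining $\epo_X$-minimal biminimal extensions over subsets; the whole difficulty is to replace ``countable'' by ``finite''.

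Case (2) I expect to be the cleanest. Fix $A \in \K$. In $G$ the set $A$ has a finite minimal resolution, and finitariness of the $\e$-resolutions --- each $\epo_X$ a well-order with finite equivalence classes --- bounds the branching of the resolution process and hence the number of isomorphism types of minimal resolutions of $A$, say $\bar A_1, \dots, \bar A_r$. Using the universal formulae $k_{\bar A_i}$ of the previous lemma, which define ``is a closed copy of $\bar A_i$'', the sentence $\forall \x\,\big(\Delta_A(\x) \rightarrow \bigvee_{i \le r}\exists \y\, k_{\bar A_i}(\x\y)\big)$ holds in $G$, is first-order, and therefore transfers to $M \models \Th(G)$. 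Ranging over quantifier-free types, this says precisely that every finite subset of $M$ has a finite closure, so $M$ is not a Moss structure.

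For case (3) I would use $\omega$-saturation to realize, for a given $A \finsubs M$, a \emph{closedness type} $q(\y)$ over $A$. This type asserts that $A\y$ has the quantifier-free type of a finite closure of $A$ computed in $G$ and, for every biminimal-pair shape $(X,Z)$ with $X \subseteq A\y$ and $Z$ lying strictly below the biminimal extensions already present in $A\y$ under $\epo_X$, contains the formula $\neg\exists\z\,(\ldots)$ forbidding such a $Z$; by (the $\e$-analogue of) Lemma \ref{no-bmp-suff}, any realization $\bar A$ of $q$ satisfies $\bar A \leq M$. Finite satisfiability over the parameters $A$ should follow because each finite fragment of $q$ is a consequence of a sentence of $\Th(G)$ witnessed by the genuine finite closure of a copy of $A$ in $G$, and $M \equiv G$; then an $\omega$-saturated $M$ realizes $q$, producing a finite closure of $A$.

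Case (1), together with the verification underlying (3), is where I expect the real work. For (1) I would take the countable resolution $B \le M$ of $A$ (all adjoined extensions $\epo$-minimal) and the finite closure $\bar A$ of $A$ in $G$, and build a countable $D \supseteq B$ with $B \le D$ in which $A$ acquires the finite closure $\bar A \leq D$; since $B$ and $\bar A$ each already contain $\epo_X$-minimal extensions over all of their subsets, the strongness of $B$ and of $\bar A$ inside $D$ can be checked directly against biminimal pairs, with no appeal to free amalgamation. Feeding the strong embedding $\mathrm{id}\colon B \into M$ through countable injectivity extends it to a strong embedding of $D$, and the image of $\bar A$ is a finite closed superset of the original $A$. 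The main obstacle is exactly the construction of $D$: one must identify the two copies of $A$ --- which is \emph{not} $\leq$-strong in either $B$ or $\bar A$ --- while retaining $B \le D$ and $\bar A \le D$ and keeping the age of $D$ inside $\K$. This amalgamation over a non-strong overlap, and the parallel task in (3) of showing the closedness type is finitely satisfiable over the fixed parameters $A$, are the delicate points on which the whole argument turns.
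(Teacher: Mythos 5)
Your high-level plan (reduce everything to finite closures, transfer closure information from the generic $G$ by finitariness, and play countable injectivity against a countable resolution) matches the paper's strategy in spirit, but the step your whole argument pivots on is false. In case (2) you claim that finitariness bounds the number of isomorphism types of minimal resolutions of $A$, so that closedness is captured by a finite disjunction $\bigvee_{i \le r} \exists \y \, k_{\bar A_i}(\x\y)$ quantified over all copies of $A$. This fails in the paper's canonical example: distanced graphs have finitary $\e$-resolutions, yet for $A$ a pair of non-adjacent vertices the minimal resolutions occurring in $G_d$ are geodesics of every finite length, so across the copies of $A$ there are infinitely many isomorphism types of minimal resolutions; your disjunction is infinitary, and every finite truncation of your sentence is outright false in $G_d$ (a copy of $A$ at distance $100$ admits no copy at all, let alone a closed one, of a path of length at most $r$). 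The root cause is precisely what makes $\e$-resolution classes interesting: the quantifier-free type of $A$ does not determine its closure, so no sentence that sees only copies of $A$ can do the job. The same defect sinks your cases (1) and (3), where you import ``the finite closure $\bar A$ of $A$ computed in $G$'': since $A \finsubs M$, there is no canonical copy of $A$ in $G$, and a closure shape borrowed from $G$ (say a geodesic of length $5$) is in general incompatible with the actual configuration of $A$ inside $M$ (distance $100$), so your type $q$ is not even satisfiable over the parameters $A$ and your amalgam $D$ cannot exist. You flag both points as ``delicate''; they are in fact exactly where the proof lives, and they cannot be repaired from the quantifier-free type of $A$ alone.

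The paper supplies the missing move: choose the candidate closure inside $M$ \emph{before} doing any transfer. For each $X \subseteq A$ pick $Y_X \finsubs M$ with $(X, Y_X)$ a biminimal pair that is $\epo_X$-minimal, and set $B := A \cup \bigcup_{X \subseteq A} Y_X$; this $B$ exists in $M$ by the very definition of $\e$-resolutions and carries the data (for distanced graphs, the true $M$-distances) that the quantifier-free type of $A$ omits. All three cases are then anchored to $B$ rather than to $A$: in case (1) one takes a countable resolution $C$ of $A$ in $M$, notes $C \leq CB$ --- both are subsets of $M$, so no amalgamation over the non-strong base $A$ is ever performed --- and applies countable injectivity to re-embed $B$ strongly over $C$; in case (2) the sentence transferred from $G$ is not about copies of $A$ but about copies of $B$, namely that every embedding of $A$ which extends to a copy of $B$ extends to a closed copy of $B$ (first-order by finitariness via the $k_B$-formulas), and its antecedent is witnessed in $M$ by $B$ itself; in case (3) $\omega$-saturation realizes the type asserting that $A\x$ is a closed copy of $B$, with finite satisfiability supplied by sentences $\sigma_n$ true in $G$ together with the presence of $B$ in $M$. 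Without this anchoring step your proposal cannot be completed.
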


\begin{proof}
  For all cases let $A \finsubs M$ and for $X \subseteq A$, choose
  $Y_X \finsubs M$ so that $(X, Y_X)$ is a biminimal pair and $Y_X$ is
  minimal in the $\epo_X$ ordering.  Let $B := A \cup \bigcup_{X \subseteq A} Y_X$
  \begin{enumerate}
  \item Let $C$ be a countable resolution of $A$ note that $C \leq
    CB$ since $C$ is closed.  Thus
     by countable injectivity $B$ embeds strongly into
    $M$ over $C$.  Letting $B'$ be the image of $B$ under such an
    embedding, we have that $B'$ is a finite resolution of $A$ in $M$.
  \item Let $h:  B \into G$ be any embedding of $B$ into the $(\K,
    \leq)$-generic and let $C_h$ be any finite resolution of $h(A)$.
    Note that $C_h \leq C_h h(B)$ so
    that $h$  extends to a strong embedding $B \into G$.  Thus every
    embedding of $A$ into the generic which extends to a copy of $B$
    also extends to a closed copy of $B$.  Since the class is
    finitary, this is a first-order statement and thus holds in $M$ as
    well.  
  \item The argument is similar to the previous one.  Now, for
    $n \in \omega$ let $\sigma_n$ be a sentence which says of its
    models $N$ that for any $h: B \into N$, $h$ can be extended to
    some ${\hat h}: B \into N$ such that ${\hat h}(B)$ is closed in
    every cardinality $|B| + n$ extension of ${\hat h}(B)$.  As
    before, we must have $M \models \sigma_n$ for all $n$ (every
    $h: B \into G$ gives rise to a strong ${\bar h}: B \into G$ over
    $h(A)$ so that $G \models \sigma_n$, hence $M$ does as well.)  Thus
    there is a type $p(\x)$ which asserts that $A \x$ is isomorphic to
    $B$ and is closed.  By saturation, $p$ is realized in $M$.
  \end{enumerate}
\end{proof}

Let $(\K_\e, \leq_\e)$ be the $\e$-companion to $(\K_p, \leq_p)$ from
Example \ref{path-example} in which $(x-1) \epo_x (x+1)$ and let $M$
consist of countably many infinite paths with order types $(\Z, <)$
separated by dense linear orders of vertices; then $M$ will be a Moss
structure.  The only finite closed sets are sets of vertices of degree
$0$ so we have injectivity.  On the other hand, no finite subset of
one of the paths of order-type $(\Z, <)$ will have a finite closure.
In fact, noting that an $\omega$-saturated model of the $(\K_\e,
\leq_\e)$-generic will have a path with order type $(\Z, <)$, this
example essentially shows that a Moss structure exists which is
elementarily equivalent to the generic.  This illustrates the
necessity of requiring finitary closures in the above theorem, since
it is a quick check that $(\K_\e, \leq_\e)$ has full amalgamation.

We conclude with a restatement and proof of the main theorem

\mainthm*

\begin{proof}
  For the first item, we need only note that $\omega$-saturation of
  the $(\K, \leq)$-generic corresponds to the class not having finite
  closures (this is proved in \cite{bal-shi}) and cite Proposition
  \ref{yes-moss}.  The second item is precisely Theorem
  \ref{no-moss}.\ref{finitary}.

\end{proof}

\bibliographystyle{plain}
\bibliography{biblio}

\begin{thebibliography}{1}

\bibitem{baldwin-easter}
John~T Baldwin.
\newblock Problems on pathological structures.
\newblock In {\em Proceedings of 10th Easter conference in model theory,
  Wendisches Rietz}, pages 1--9. Citeseer, 1993.

\bibitem{bal-shel}
John~T. Baldwin and Saharon Shelah.
\newblock Randomness and semigenericity.
\newblock {\em Trans. Amer. Math. Soc.}, 349(4):1359--1376, 1997.

\bibitem{bal-shi}
John~T. Baldwin and Niandong Shi.
\newblock Stable generic structures.
\newblock {\em Ann. Pure Appl. Logic}, 79(1):1--35, 1996.

\bibitem{brody-intrinsic-trans}
Justin Brody.
\newblock Full amalgamation classes with instrinic transcendentals.
\newblock arXiv:math.lo/1512.03888.

\bibitem{mcl-kuek}
D.~W. Kueker and M.~C. Laskowski.
\newblock On generic structures.
\newblock {\em Notre Dame J. Formal Logic}, 33(2):175--183, 1992.

\bibitem{kuek}
David Kueker.
\newblock Homogeneous-universal graphs with respect to isometric maps.

\bibitem{mcl}
Michael~C. Laskowski.
\newblock A simpler axiomatization of the {S}helah-{S}pencer almost sure
  theories.
\newblock {\em Israel J. Math.}, 161:157--186, 2007.

\bibitem{moss}
Lawrence~S Moss.
\newblock Distanced graphs.
\newblock {\em Discrete mathematics}, 102(3):287--305, 1992.

\bibitem{wagner}
Frank~O. Wagner.
\newblock Relational structures and dimensions.
\newblock In {\em Automorphisms of first-order structures}, Oxford Sci. Publ.,
  pages 153--180. Oxford Univ. Press, New York, 1994.

\end{thebibliography}

\end{document}